\newcommand{\rl}{\mathbb{R}}
\newcommand{\cx}{\mathbb{C}}
\newcommand{\ai}{\sqrt{-1}}
\newcommand{\inj}{\hookrightarrow}
\newcommand{\cst}{\mathrm{const}}
\newcommand{\kah}{K\"ahler }
\newcommand{\ddbar}{\partial \bar{\partial}}
\newcommand{\tr}{\mathrm{tr}}
\newcommand{\actson}{\curvearrowright}
\newcommand{\prj}{\mathbb{P}}
\newcommand{\XX}{\mathcal{X}}
\newcommand{\LL}{\mathcal{L}}
\theoremstyle{plain}
\newtheorem{theorem}{Theorem}[section]
\newtheorem{lemma}[theorem]{Lemma}
\newtheorem{proposition}[theorem]{Proposition}
\newtheorem{corollary}[theorem]{Corollary}
\theoremstyle{definition}
\newtheorem{definition}[theorem]{Definition}
\theoremstyle{definition}
\newtheorem{remark}[theorem]{Remark}
\begin{document}

\title{Relative stability associated to quantised extremal K\"ahler metrics}
\author{Yoshinori Hashimoto}

\maketitle

\begin{abstract}
We study algebro-geometric consequences of the quantised extremal K\"ahler metrics, introduced in the previous work of the author. We prove that the existence of quantised extremal metrics implies weak relative Chow polystability. As a consequence, we obtain asymptotic weak relative Chow polystability and $K$-semistability of extremal manifolds by using quantised extremal metrics; this gives an alternative proof of the results of Mabuchi and Stoppa--Sz\'ekelyhidi. In proving them, we further provide an explicit local density formula for the equivariant Riemann--Roch theorem.
\end{abstract}

\tableofcontents

\section{Introduction}

Donaldson's work \cite{donproj1} implies that if a polarised \kah manifold $(X,L)$ with discrete automorphism admits a constant scalar curvature \kah (cscK) metric, it admits a sequence of \kah metrics $\left\{ \omega_k \right\}_k$ satisfying $\rho_k (\omega_k) = \text{const}$, where $\rho_k (\omega_k)$ is the Bergman function of $\omega_k$. Combined with the results of Luo \cite{luo} and Zhang \cite{zhang}, this further implies that such $(X,L)$ is asymptotically Chow stable, establishing an important result in \kah geometry connecting the scalar curvature and algebro-geometric stability of $(X,L)$ in the sense of Geometric Invariant Theory (GIT). The reader is referred to the survey \cite{biquard} for more details on this theory.

When the automorphism group is no longer discrete, a generalisation of Donaldson's result was established in \cite{yhpreprint}, widening the scope to include extremal \kah metrics. This was done by considering an equation 
\begin{equation}
	\label{dbgradrom0}
\bar{\partial} \mathrm{grad}^{1,0}_{\omega} \rho_k (\omega) = 0.
\end{equation}
This paper studies consequences of the above equation to GIT stability notions in algebraic geometry.

Fixing a maximal compact subgroup $K$ of the automorphism group (cf.~Remark \ref{rmntmcsbz}), our first application to stability is the following.

\begin{theorem}
	\label{wrelchowstab}
	Suppose that there exists a $K$-invariant Fubini--Study metric $\omega_k \in c_1 (L)$ induced from $X \inj \prj(H^0 (X,L^k)^*)$ which satisfies $\bar{\partial} \mathrm{grad}^{1,0}_{\omega_k} \rho_k (\omega_k) = 0$. Then $(X,L^k)$ is weakly Chow polystable relative to the centre of $K$.
\end{theorem}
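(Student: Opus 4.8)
The plan is to translate the statement into a finite-dimensional GIT problem for the Chow point of $X \inj \prj(H^0(X,L^k)^*) =: \prj^N$ and to run a relative Kempf--Ness argument. First I would recall the moment-map description of balanced metrics: the group $G = SL(N+1,\cx)$ acts on the space of Chow forms, its maximal compact $U = SU(N+1)$ carries a moment map $\mu$, and at the Chow point $\hat{X}$ of the given embedding the value $\mu(\hat{X})$ is, up to an additive constant multiple of the identity, the Hermitian ``balancing matrix'' assembled from the Bergman function $\rho_k(\omega_k)$. The basic dictionary entry to record is that $\mu(\hat{X}) = 0$ if and only if $\rho_k(\omega_k)$ is constant, i.e.\ $\omega_k$ is balanced; more generally the pairing $\langle \mu(\hat X), \xi\rangle$ for $\xi \in \mathfrak{u}$ is the $L^2$-inner product of $\rho_k(\omega_k)$ with the Hamiltonian of $\xi$.

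The central step is to read the quantised extremal equation \eqref{dbgradrom0} as the assertion that $\mu(\hat X)$ lies in the Lie algebra $\mathfrak{z}$ of the centre $Z(K)$. Indeed, $\bar\partial\,\mathrm{grad}^{1,0}_{\omega_k}\rho_k(\omega_k)=0$ says that $\mathrm{grad}^{1,0}_{\omega_k}\rho_k(\omega_k)$ is a holomorphic vector field, so that $\rho_k(\omega_k)$ is its Hamiltonian; since $\omega_k$ is $K$-invariant the function $\rho_k(\omega_k)$ is $K$-invariant, and a short Poisson-bracket computation shows that this vector field commutes with all of $\mathfrak{k}$, hence lies in the centraliser $Z_{\mathfrak h}(\mathfrak k) = \mathfrak z^{\cx}$. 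Feeding this back through the moment-map dictionary of the previous step identifies $\mu(\hat X)$ with an element $\chi \in \mathfrak{z}$; in other words $\omega_k$ is \emph{relatively balanced}. This is precisely the place where the phrase ``relative to the centre of $K$'' enters.

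With $\mu(\hat X)=\chi\in\mathfrak z$ established, I would conclude by convexity. The Kempf--Ness log-norm functional $\Psi(\sigma)=\log\|\sigma\cdot\hat X\|$ on $G/U$ is convex along the geodesic generated by any one-parameter subgroup $\lambda$; its asymptotic slope is the Chow--Mumford weight $\mathrm{Chow}(\lambda)$, while its derivative at the base point is $\langle\mu(\hat X),\eta_\lambda\rangle=\langle\chi,\eta_\lambda\rangle$, where $\eta_\lambda$ generates $\lambda$. Convexity then forces $\mathrm{Chow}(\lambda)\ge\langle\chi,\eta_\lambda\rangle$ for every $\lambda$ commuting with $Z(K)^{\cx}$, i.e.\ the relative weight $\mathrm{Chow}(\lambda)-\langle\chi,\eta_\lambda\rangle$ is non-negative. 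Since $\chi\in\mathfrak z$, the subtracted term depends only on the $\mathfrak z$-component of $\eta_\lambda$, which is exactly the torus contribution discarded in the relative theory; the inequality is therefore the defining non-negativity for weak Chow polystability relative to $Z(K)$.

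The main obstacle is the translation of the second paragraph carried out with the correct normalisations: one must compute $\mu(\hat X)$ and verify that its component along $\mathfrak z$ matches, term by term, the quantity that the relative theory subtracts, so that the convexity inequality lands exactly on the relative weight. A secondary point, which explains the qualifier \emph{weakly}, is that the convexity argument delivers only the non-negativity of the relative weight for all admissible $\lambda$ and not the analysis of the equality case (closedness of the $G$-orbit); one thus obtains the weak form of relative polystability rather than full relative polystability.
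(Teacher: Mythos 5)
Your overall strategy -- show that the quantised extremal equation forces the Chow point to be a critical point of a relative Kempf--Ness problem for the centraliser of $Z=Z(K)$ -- is in the same spirit as the paper, which shows that $FS(H)$ is balanced relative to $Z$ (Proposition \ref{rbalippbbnnla}) and then invokes Mabuchi's characterisation of weak relative Chow polystability (Theorem \ref{mabpbwcst}). However, there are two genuine gaps. First, your central claim that $\mu(\hat X)$ lies in $\mathfrak z$ is not what the equation gives. By Proposition \ref{invmuholo} and Remark \ref{invmuholoxirem}, $\bar{\partial}\,\mathrm{grad}^{1,0}_{\omega_k}\rho_k(\omega_k)=0$ is equivalent to $\bar{\mu}_X(\underline{s})=(cI+\xi)^{-1}$ with $\xi\in\theta_*(\ai\mathfrak z)$: the centre of mass is the \emph{inverse} of an affine element of $\theta_*(\ai\mathfrak z)$, not such an element itself. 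The inverse is still block-scalar on the isotypic decomposition $H^0(X,L^k)=\bigoplus_\nu V_k(\chi_\nu)$, which is all one needs for the weak statement, but it is in general \emph{not} of the form $c'I+\xi'$ with $\xi'\in\theta_*(\ai\mathfrak z)$ -- the weights $b_\nu$ are eigenvalues of $(cI+\xi)^{-1}$ rather than of $cI+\xi$ and so fail the constraint (\ref{eqofbnuchstrelt}). This inversion, not any deficiency of the convexity argument, is the actual reason the conclusion is only \emph{weak} relative polystability; your Poisson-bracket heuristic skips the $FS$/$Hilb$ computation where the inverse appears.

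Second, your concluding step proves the wrong statement. Non-negativity of the (relative) Chow weights for all admissible one-parameter subgroups is \emph{semistability}; the theorem asserts \emph{polystability}, i.e.\ closedness of the $\widetilde{G^c_{T^\perp}}=\prod_\nu SL(V_k(\chi_\nu))$-orbit of the Chow form (Definition \ref{defwcpsalkreltt}). The qualifier ``weak'' refers to this smaller group, not to dropping the equality-case analysis, so your final paragraph conflates two independent distinctions. To close the gap you must either observe that the block-scalar form of $\bar\mu_X$ means the Chow point is a genuine zero of the moment map for $\prod_\nu SU(V_k(\chi_\nu))$ and invoke the Kempf--Ness theorem for that group, or, as the paper does, exhibit the rescaled basis $s'_{\nu,i}=k^{-n/2}b_\nu^{-1/2}s_{\nu,i}$ satisfying $\sum_{\nu,i}b_\nu|s'_{\nu,i}|^2_{FS(H)^k}=\mathrm{const}$ and appeal to Theorem \ref{mabpbwcst}.
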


We shall see in the proof that the converse does not hold in general; the solvability of (\ref{dbgradrom0}) is strictly stronger than weak relative Chow polystability (cf.~Remark \ref{rembalstab1dir}).

The main result of \cite{yhpreprint} is that (\ref{dbgradrom0}) is solvable for all large enough $k$, if $( X,L )$ admits an extremal metric (cf.~Theorem \ref{mtqekm}). Combining the main result of \cite{yhpreprint} and Theorem \ref{wrelchowstab}, we obtain the following corollary.

\begin{corollary}
	If a polarised \kah manifold $(X,L)$ admits an extremal \kah metric, then it is asymptotically weakly Chow polystable relative to the centre of $K$.
\end{corollary}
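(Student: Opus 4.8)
The plan is to obtain the corollary as a direct combination of Theorem \ref{wrelchowstab} with the main existence result of \cite{yhpreprint}. Unwinding the definitions, ``asymptotically weakly Chow polystable relative to the centre of $K$'' means precisely that $(X,L^k)$ is weakly Chow polystable relative to the centre of $K$ for all sufficiently large $k$. Thus it suffices to produce, for each $k \gg 0$, a $K$-invariant Fubini--Study metric satisfying the hypothesis of Theorem \ref{wrelchowstab}.

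First, I would invoke Theorem \ref{mtqekm}, the main theorem of \cite{yhpreprint}: since $(X,L)$ admits an extremal \kah metric, the quantised extremal equation (\ref{dbgradrom0}) is solvable for all $k \gg 0$. For each such $k$ this furnishes a Fubini--Study metric $\omega_k \in c_1(L)$, induced from the Kodaira embedding $X \inj \prj(H^0(X,L^k)^*)$, with $\bar{\partial}\,\mathrm{grad}^{1,0}_{\omega_k}\rho_k(\omega_k) = 0$. The one point requiring care is that the metric supplied by Theorem \ref{mtqekm} be $K$-invariant; I would confirm this from the construction in \cite{yhpreprint}, where the relevant perturbative argument can be run equivariantly with respect to the fixed maximal compact subgroup $K$ (the extremal metric being itself $K$-invariant), so that $\omega_k$ inherits $K$-invariance.

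With such a $K$-invariant $\omega_k$ in hand for every large $k$, Theorem \ref{wrelchowstab} applies directly and yields that $(X,L^k)$ is weakly Chow polystable relative to the centre of $K$. As this holds for all $k \gg 0$, we conclude the asymptotic statement. The corollary is therefore essentially a formal consequence of the two cited results, and the only genuine obstacle is the bookkeeping ensuring $K$-invariance of the solutions and matching the precise hypotheses of Theorem \ref{wrelchowstab}; I expect no additional analytic or algebraic difficulty beyond what is already contained in \cite{yhpreprint} and in the proof of Theorem \ref{wrelchowstab}.
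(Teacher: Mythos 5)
Your proposal is correct and is exactly the paper's argument: the corollary is stated as a direct combination of Theorem \ref{mtqekm} and Theorem \ref{wrelchowstab}. Note that the $K$-invariance you flag as the ``one point requiring care'' is already guaranteed explicitly by item (3) of Theorem \ref{mtqekm} as stated in the paper, so no further verification of the construction in \cite{yhpreprint} is needed.
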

This corollary is also a consequence of the works of Mabuchi \cite{mab04ext,mab05,mab09}. A stronger version of the above corollary was recently proved by Mabuchi \cite{mab2016} (see also \cite{seyrel}).

Our second application to stability is the following.

\begin{theorem}
	\label{relkss}
Suppose that there exists an extremal metric $\omega \in c_1 (L)$. Then $(X,L)$ is $K$-semistable relative to the extremal $\cx^*$-action.
\end{theorem}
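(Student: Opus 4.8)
The plan is to combine the quantisation result of \cite{yhpreprint} with Theorem \ref{wrelchowstab} and then pass to the large $k$ limit. By Theorem \ref{mtqekm}, the existence of an extremal metric $\omega \in c_1(L)$ guarantees that equation (\ref{dbgradrom0}) is solvable for all sufficiently large $k$; Theorem \ref{wrelchowstab} then yields that $(X, L^k)$ is weakly Chow polystable relative to the centre of $K$ for all large $k$, that is, $(X,L)$ is asymptotically weakly Chow polystable relative to the centre of $K$, as in the corollary above. It remains to deduce relative $K$-semistability from this asymptotic relative Chow condition, which is the standard ``Chow implies $K$'' limiting argument carried out in the relative setting.

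First I would fix a test configuration $(\XX, \LL)$ for $(X,L)$ that is compatible with the extremal $\cx^*$-action, so that the central fibre carries an action of the torus generated by the test-configuration $\cx^*$ together with the extremal $\cx^*$. Writing $A_k$ for the infinitesimal generator of the test-configuration action on $H^0(X_0, \LL_0^k)$ and $B_k$ for that of the extremal $\cx^*$-action, the relative data to be controlled are the weight $\tr(A_k)$, the dimension $N_k = \dim H^0(X, L^k)$, and the pairings $\tr(A_k B_k)$ and $\tr(B_k^2)$. By relative Chow polystability at level $k$, the relative Chow weight---obtained by replacing $A_k$ with its orthogonal projection $A_k - (\tr(A_k B_k)/\tr(B_k^2)) B_k$ away from the extremal direction---is nonnegative, since polystability implies semistability.

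The key step is to establish asymptotic expansions in $k$ of each of these traces and to identify the leading term of the normalised relative Chow weight with the relative Donaldson--Futaki invariant $\mathrm{DF}_V(\XX, \LL)$, where $V$ is the extremal vector field. This is exactly where the explicit local density formula for the equivariant Riemann--Roch theorem enters: it supplies the expansions of $\tr(A_k)$, $\tr(A_k B_k)$ and $\tr(B_k^2)$ with control of the subleading coefficients, and shows that the projection defining the relative Chow weight matches, to leading order, the subtraction defining $\mathrm{DF}_V$. Granting these expansions, the suitably normalised relative Chow weight converges as $k \to \infty$ to a positive multiple of $\mathrm{DF}_V(\XX, \LL)$; nonnegativity of the relative Chow weights for all large $k$ then forces $\mathrm{DF}_V(\XX, \LL) \ge 0$, which is precisely relative $K$-semistability.

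The main obstacle I expect is the bookkeeping of the equivariant Riemann--Roch expansions in the relative setting: one must ensure that the orthogonal projection onto the complement of the extremal direction, performed at each finite level $k$, converges to the corresponding projection in the relative Donaldson--Futaki invariant, with no contribution from the subleading terms that could spoil the sign in the limit. Controlling the cross-term $\tr(A_k B_k)$ together with its normalisation $\tr(B_k^2)$ uniformly in $k$ is the delicate part, and the local density formula for the equivariant Riemann--Roch theorem is the tool that makes this tractable.
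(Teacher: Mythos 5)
Your strategy---first establish asymptotic weak relative Chow polystability via Theorem \ref{wrelchowstab}, then run a relative version of the ``Chow implies $K$'' limiting argument---is not the route the paper takes, and as written it has a genuine gap at the step where you invoke the level-$k$ stability. Weak relative Chow polystability at level $k$ is closedness of the orbit under $\widetilde{G_{T^{\perp}}^c} = \prod_{\nu} SL(V_k(\chi_{\nu}))$, so the Hilbert--Mumford inequality it yields applies only to one-parameter subgroups whose generators are trace-free on \emph{each} isotypic component $V_k(\chi_{\nu})$. The element $A_k - \bigl(\tr(A_k B_k)/\tr(B_k^2)\bigr) B_k$ that you propose to test is in general neither of this form nor rational, so the nonnegativity you assert does not follow directly. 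If you instead subtract $\sum_{\nu} \frac{\tr(A_k|_{V_k(\chi_{\nu})})}{\dim V_k(\chi_{\nu})}\,\mathrm{id}_{V_k(\chi_{\nu})}$ to land in $\mathrm{Lie}(\widetilde{G_{T^{\perp}}^c})$, you must then prove that the resulting correction to the normalised Chow weight converges to exactly $\langle \beta , \chi \rangle$. That requires controlling the asymptotics of the isotypic traces and, in effect, the precise values of the weights $b_{\nu}$ of the relative balanced metric --- and Remark \ref{rembalstab1dir} is explicit that these are \emph{not} determined by the stability statement alone. Discarding the equation $\bar{\partial}\mathrm{grad}^{1,0}_{\omega_k}\rho_k(\omega_k)=0$ in favour of its stability consequence throws away precisely the information needed to identify the limit of the correction term.

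The paper avoids this by never passing through the GIT statement. For a test configuration generated by $B_k$ commuting with $\chi$, it differentiates the modified balancing energy $\mathcal{Z}^{A_k}$ along the geodesic $H_t = e^{-B_k t}H_0$: geodesic convexity plus the existence of a critical point (Theorem \ref{mtqekm}) give $\lim_{t\to\infty}\frac{d}{dt}\mathcal{Z}^{A_k}(H_t) > 0$. The first term of this derivative at infinity is $k^n \mathrm{Chow}_k(\XX_{\beta},\LL_{\beta}^{\otimes k})$ by Donaldson, which tends to $DF(\XX_{\beta},\LL_{\beta})$; the second term is $-\frac{V}{N}\tr\bigl(B_k ( I + C_{A_k} I + \frac{A_k}{2\pi k})^{-1}\bigr)$, and here the \emph{explicit} formula $A_k = \frac{V}{N}\theta_*(\mathrm{grad}_{\omega_k}\rho_k(\omega_k))$ together with the Bergman kernel expansion shows $\frac{A_k}{2\pi k} = -\frac{\theta_*(Jv_s)}{8\pi^2 k^2} + \cdots$, whence this term converges to $\langle \beta , \chi \rangle$ after the normalisation $B_{\chi,k} = \theta_*(Jv_s)/8\pi^2$ is pinned down via Theorem \ref{exeploceqrrt}. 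This yields $DF(\XX_{\beta},\LL_{\beta}) - \langle\beta,\chi\rangle \ge 0$ directly. You correctly anticipated that the equivariant Riemann--Roch density formula is the tool for identifying the correction term, but it is applied to the specific matrix $A_k$ supplied by the quantisation theorem, not to weights extracted from an abstract polystability hypothesis.
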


\begin{remark}
	Recall that the above theorem was first proved by Stoppa and Sz\'ekelyhidi \cite{stosze} by using the lower bound of the Calabi functional, and then by Mabuchi \cite{mab2014} by using a different method. The point of the above statement is that we give another independent, alternative proof by using the equation (\ref{dbgradrom0}).

	The proof (given in \S \ref{pfthmrelkss}) is conceptually similar to the proof of asymptotic Chow stability implying $K$-semistability \cite{rt2}, but will further involve the detailed analysis of the ``weight'' of relative balanced metrics, in which we make \textit{direct} use of the equation (\ref{dbgradrom0}).
\end{remark}

In proving the above Theorem \ref{relkss}, we shall prove the following ``explicit local density formula'' for the equivariant Riemann--Roch theorem in terms of the Bergman function, which could be interesting in its own right.
\begin{theorem} \label{exeploceqrrtit}
Writing $A_k$ for the generator on $H^0(X,L^k)$ of the product test configuration defined by a Hamiltonian vector field $v$ with Hamiltonian $\psi$ with respect to $\omega_h$, we have
\begin{equation*}
	\frac{1}{k } \mathrm{tr} \left( A_k \right) = - \int_X \psi  {\rho}_k (\omega_h) \frac{\omega^n_h}{n!} - \int_X  \frac{1}{ 4 \pi k} \left( d \psi , d  {\rho}_k (\omega_h) \right)_{\omega_h} \frac{\omega^n_h}{n!}.
\end{equation*}
\end{theorem}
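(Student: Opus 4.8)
The plan is to compute $\tr(A_k)$ directly, as the integral over $X$ of an equivariant Bergman density, rather than through fixed-point localisation. The essential input is an explicit pointwise formula for the infinitesimal generator $A_k$ acting on $H^0(X,L^k)$. Writing $v = \mathrm{grad}^{1,0}_{\omega_h}\psi$ for the holomorphic $(1,0)$-vector field attached to $\psi$ (so that $\iota_v \omega_h = \sqrt{-1}\,\bar{\partial}\psi$) and $\nabla$ for the Chern connection on $(L^k,h^k)$, the canonical lift of $v$ to $L^k$ splits into a horizontal part and a vertical part, so that
\[
A_k s = -\frac{1}{2\pi}\nabla_v s - k\psi\, s \qquad (s \in H^0(X,L^k)),
\]
the vertical multiplication operator being governed by the Hamiltonian $\psi$ and the horizontal part by covariant differentiation along $v$. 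The first step is to establish (or recall from the construction of the product test configuration) this formula, with attention to the normalisation $\omega_h \in c_1(L)$, i.e.\ that the curvature of $h$ is $-2\pi \sqrt{-1}\,\omega_h$; this is where the factor $2\pi$ relating the horizontal and vertical parts enters, in exactly the manner of Kostant's prequantum operator.

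Next, choosing an $L^2$-orthonormal basis $\{s_i\}$ of $H^0(X,L^k)$ with respect to $h^k$ and $\frac{\omega_h^n}{n!}$, I would write $\tr(A_k) = \sum_i \langle A_k s_i, s_i\rangle_{L^2} = \int_X \big(\sum_i \langle A_k s_i, s_i\rangle_{h^k}\big)\frac{\omega_h^n}{n!}$ and treat the two parts separately. The vertical part is immediate: $\sum_i (-k\psi)|s_i|^2_{h^k} = -k\psi\,\rho_k(\omega_h)$ by the definition $\rho_k(\omega_h) = \sum_i |s_i|^2_{h^k}$, contributing $-k\int_X \psi\,\rho_k(\omega_h)\frac{\omega_h^n}{n!}$, which after dividing by $k$ is exactly the first term on the right-hand side. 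For the horizontal part I would use holomorphicity of $s_i$: since $\bar{\partial}s_i = 0$, the Chern connection gives the pointwise identity $\partial_p |s_i|^2_{h^k} = \langle \nabla_p s_i, s_i\rangle_{h^k}$, whence, contracting with the coefficients $v^p = g^{p\bar{q}}\partial_{\bar{q}}\psi$ and summing over $i$, one obtains $\sum_i \langle \nabla_v s_i, s_i\rangle_{h^k} = g^{p\bar{q}}(\partial_{\bar{q}}\psi)\,\partial_p \rho_k(\omega_h)$.

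It remains to integrate the horizontal density. Pointwise it is complex, but on the compact \kah manifold $X$ integration by parts (using that $g^{p\bar{q}}\partial_p\partial_{\bar{q}}$ is formally self-adjoint, so that $\int_X \rho_k(\omega_h)\,g^{p\bar q}\partial_p\partial_{\bar q}\psi\,\frac{\omega_h^n}{n!} = \int_X \psi\,g^{p\bar q}\partial_p\partial_{\bar q}\rho_k(\omega_h)\,\frac{\omega_h^n}{n!}$) shows that its anti-symmetric, imaginary part integrates to zero, so that $\int_X g^{p\bar{q}}(\partial_{\bar{q}}\psi)\partial_p \rho_k(\omega_h)\frac{\omega_h^n}{n!} = \tfrac{1}{2}\int_X \left(d\psi, d\rho_k(\omega_h)\right)_{\omega_h}\frac{\omega_h^n}{n!}$, the factor $\tfrac12$ being the standard one relating the Hermitian pairing of $(1,0)$-forms to the Riemannian pairing of the real differentials. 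Assembling the two parts, dividing by $k$, and combining the $\tfrac12$ with the $\tfrac{1}{2\pi}$ from the lift formula yields the stated coefficient $\tfrac{1}{4\pi k}$; note that the resulting identity is exact for each $k$, with no asymptotic error.

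The main obstacle is the first step together with the attendant bookkeeping: pinning down the exact lift formula for $A_k$, and in particular the relative normalisation of its horizontal and vertical parts, and then tracking the two independent sources of the constant $4\pi = 2\cdot 2\pi$, namely the factor $\tfrac12$ (real versus holomorphic gradient) and the factor $\tfrac{1}{2\pi}$ (the normalisation of the curvature and of the Hamiltonian forced by $\omega_h \in c_1(L)$), as well as fixing the overall sign so that both sides are manifestly real. The integration-by-parts step is routine once compactness of $X$ is invoked, but it is precisely what guarantees that the a priori complex local horizontal density integrates to the real, symmetric pairing $\left(d\psi, d\rho_k(\omega_h)\right)_{\omega_h}$ appearing on the right-hand side.
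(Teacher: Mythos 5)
Your argument is correct in outline but takes a genuinely different route from the paper. The paper never touches the connection: it works entirely in the finite-dimensional picture, combining the moment-map formula $\psi' = -\tfrac{1}{2\pi k}\sum_{i,j}A_{ij}h^k_{FS(Hilb(h))}(s'_i,s'_j)$ for the Fubini--Study Hamiltonian (quoted from Lemma 4.3 of \cite{yhpreprint}) with Rawnsley's identity $h^k_{FS(Hilb(h))} = \bar{\rho}_k(\omega_h)^{-1}h^k$ and the transformation law of the Hamiltonian under $\omega_h \mapsto \omega_h + \tfrac{\ai}{2\pi k}\ddbar \log \bar{\rho}_k(\omega_h)$; this yields the \emph{pointwise} identity (\ref{putthmstmtabveq}) for the $S^1$-equivariant Bergman kernel, and the theorem follows by integration. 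You instead compute $\mathrm{tr}(A_k)$ directly from the prequantum-operator (Kostant) form of the generator, splitting into a vertical multiplication part and a horizontal covariant-derivative part, and symmetrise the horizontal density by integration by parts. Your route is more self-contained and local-differential-geometric; the paper's route buys the stronger pointwise statement (\ref{sobkelasyexp}) with the symmetric pairing $(d\psi,d\bar{\rho}_k)_{\omega_h}$ already appearing before integration, which is what the Remark following Theorem \ref{exeploceqrrt} emphasises.

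Two caveats. First, the one step you assert rather than prove --- the normalised lift formula $A_k s = -\tfrac{1}{2\pi}\nabla_v s - k\psi s$ --- is not bookkeeping but the entire content of the theorem: every constant and sign in the statement is encoded there, and it is exactly the input the paper replaces by the cited moment-map formula. You would also need to reconcile your convention $\iota_v\omega_h = \ai\,\bar{\partial}\psi$ with the paper's real convention $\iota_v\omega_h = -d\psi$ (for which the $(1,0)$-part satisfies $\iota_{v^{1,0}}\omega_h = -\partial... $ rather than $\ai\,\bar{\partial}\psi$), and with the additive normalisation of $\psi$ fixed in Remark \ref{remaddcsthampsi}, since a shift $\psi \mapsto \psi + c$ changes $\mathrm{tr}(A_k)$ by $-ckN$. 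Second, a simplification: since $\omega_h$ is $K$-invariant, $\rho_k(\omega_h)$ is invariant under the flow of $v$, so the Poisson bracket $\{\psi,\rho_k(\omega_h)\}$ --- which is precisely the imaginary part of your horizontal density --- vanishes \emph{pointwise}, not merely after integration; invoking this instead of formal self-adjointness of $g^{p\bar{q}}\partial_p\partial_{\bar{q}}$ upgrades your argument to recover the paper's pointwise density formula as well.
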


Finally, the recent development in the field \cite{mab2016,santip17,seyrel} means that there are now nontrivial relationships among several versions of ``quantised extremal'' metrics, and implications to relative stability. This will be reviewed in \S \ref{conjrelttprevkres}.

\section*{Organisation of the paper}
After recalling the background in \S \ref{bgoqtst}, we introduce (weak) relative Chow polystability and prove Theorem \ref{wrelchowstab} in \S \ref{stextmfdxl}. Theorem \ref{relkss} is proved in \S \ref{rlksspovquant}, where the definition of relative $K$-semistability is also provided. In \S \ref{expfflrrsn} we shall prove Theorem \ref{exeploceqrrtit}, and the last section \S \ref{conjrelttprevkres} is devoted to the review of the works of \cite{yhpreprint,mab2016,santip17,seyrel} from the point of view of relative stability.

\section*{Acknowledgements}
Much of this work was carried out in the framework of the Labex Archim\`ede (ANR-11-LABX-0033) and of the A*MIDEX project (ANR-11-IDEX-0001-02), funded by the ``Investissements d'Avenir" French Government programme managed by the French National Research Agency (ANR). Part of this work was carried out when the author was a PhD student at the Department of Mathematics of the University College London, which he thanks for the financial support; \S \ref{stextmfdxl} forms part of the author's PhD thesis submitted to the University College London.

The author thanks Joel Fine, Julien Keller, Jason Lotay, Yasufumi Nitta, Julius Ross, Shunsuke Saito, and Carl Tipler for helpful discussions.

\section{Background on quantisation} \label{bgoqtst}
We first recall the definition of the Fubini--Study metrics. We shall write in what follows $N = N_k$ for $\mathrm{dim}_{\cx} H^0 (X,L^k)$ and $V$ for $\int_X c_1 (L)^n /n!$.
\begin{definition}
Let $\mathcal{B}_k$ be the space of all positive definite hermitian matrices on $H^0 (X,L^k)$, and $\mathcal{H} (X,L)$ be the space of all positively curved hermitian metrics on $L$. 

The \textbf{Hilbert} map $Hilb : \mathcal{H} (X,L) \to \mathcal{B}_k$ is defined by
\begin{equation*}
Hilb (h) : = \frac{N}{V} \int_X h^k ( , ) \frac{\omega_h^n}{n!} .
\end{equation*}

The \textbf{Fubini--Study} map $FS :  \mathcal{B}_k \to \mathcal{H} (X,L) $ is defined by the equation
\begin{equation}
\sum_{i=1}^N |s_i|^2_{FS(H)^k} =1 \label{defoffseq}
\end{equation}
where $\{ s_i \}$ is an $H$-orthonormal basis for $H^0 (X , L^k)$. We shall write $\omega_{FS(H)}$ or $\omega_H$ for the \kah metric associated to $FS(H)$.
\end{definition}

We also recall the following result concerning the automorphism group of polarised \kah manifolds and its linearisation. This is a well-known consequence of the results presented in \cite{fujiki, kobayashi, lebsim, mfk}. Let $\mathrm{Aut}_0 (X,L)$ be the connected component of the group $\mathrm{Aut} (X,L)$ which consists of automorphisms of $X$ whose action lift to the total space of the line bundle $L$.

\begin{lemma} \label{lemdefofthtosl}
By replacing $L$ by a large tensor power if necessary, we have a unique faithful group representation
\begin{equation*}
\theta : \textup{Aut}_0 (X , L) \to SL(H^0 (X , L^k))
\end{equation*}
for all $k \in \mathbb{N}$, which satisfies 
\begin{equation} \label{liftauttheta}
\theta (f) \circ \iota = \iota \circ f
\end{equation}
for any $f \in \textup{Aut}_0 (X,L)$ and the Kodaira embedding $\iota : X \inj \prj (H^0 (X , L^k)^*)$. 
\end{lemma}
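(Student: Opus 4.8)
The plan is to produce $\theta$ by first linearising the action of $G := \mathrm{Aut}_0(X,L)$ on the total space of $L$, and then normalising the resulting linear action on sections so that it lands in $SL$. First I would fix an element $f \in G$; by the very definition of $\mathrm{Aut}_0(X,L)$ the biholomorphism $f$ lifts to a biholomorphic bundle map $\tilde f : L \to L$ covering $f$, and such a lift is unique up to multiplication by a fibrewise constant in $\cx^*$. Pulling back sections by $\tilde f^{\otimes k}$ then gives a linear action on $H^0(X,L^k)$, so that the collection of all lifts assembles into a central extension
\begin{equation*}
1 \to \cx^* \to \hat G \to G \to 1
\end{equation*}
together with a linear representation $\rho_k : \hat G \to GL(H^0(X,L^k))$ for every $k$, on which the central $\cx^*$ acts by the scalars $z \mapsto z^k$. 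Because $L^k$ is very ample for $k$ large, the sections separate points and tangent directions, so $\rho_k$ is \emph{faithful}; this is one step that forces one to replace $L$ by a sufficiently high power. By construction the contragredient of $\rho_k$ descends to an action on $\prj(H^0(X,L^k)^*)$ for which the Kodaira embedding $\iota$ is equivariant, which is exactly the relation (\ref{liftauttheta}); note that this equivariance is insensitive to the $\cx^*$-ambiguity, since rescaling a lift only rescales the homogeneous coordinates.

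The substantive point is to pin down a single-valued homomorphism $\theta : G \to SL(H^0(X,L^k))$ lifting the projective action, i.e.\ to split off the central $\cx^*$ in a way compatible with the determinant. I would consider the determinant character $\det \rho_k : \hat G \to \cx^*$ and the closed subgroup $\hat G^{SL} := \ker(\det \rho_k)$ of special-linear lifts. A scalar computation shows that $\hat G^{SL} \to G$ is surjective with finite central kernel contained in the roots of unity, so that one is left with a \emph{finite} central extension of $G$. Here I would invoke the background structure theory of \cite{fujiki, kobayashi, lebsim, mfk}: $G$ is a connected linear algebraic group, its Picard group and the relevant linearisation obstruction are torsion, and these torsion classes are annihilated upon passing to $L^m$ for a suitably divisible $m$. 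After this replacement the finite extension splits, yielding the desired $\theta$ simultaneously for all $k$. I expect this normalisation — making the determinant obstruction vanish uniformly in $k$ through a single choice of tensor power — to be the main obstacle, as it is precisely the place where the algebraic-group input and the phrase ``large tensor power'' enter.

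Finally I would establish uniqueness. Given two homomorphisms $\theta, \theta'$ lifting the same projective action into $SL(H^0(X,L^k))$, their ratio is a map $\chi : G \to \cx^*$ determined by $\theta'(f) = \chi(f)\, \theta(f)$; taking determinants gives $\chi(f)^{N} = 1$, where $N = \dim_{\cx} H^0(X,L^k)$, so $\chi(f)$ is an $N$-th root of unity for every $f$. Thus $\chi : G \to \mu_N$ is a continuous character of the \emph{connected} group $G$ into a finite group, whence $\chi \equiv 1$ and $\theta = \theta'$. This proves the asserted uniqueness and completes the construction.
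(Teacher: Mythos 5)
The paper does not actually prove this lemma: it is stated as ``a well-known consequence of the results presented in \cite{fujiki, kobayashi, lebsim, mfk}'', so there is no in-text argument to compare yours against. Judged on its own terms, your outline follows the standard route --- lift the action to $L$, organise the lifts into a central extension $1 \to \cx^* \to \hat{G} \to G \to 1$ acting on $H^0(X,L^k)$, cut down to the special-linear lifts via the determinant, and split the resulting finite central extension after passing to a tensor power --- and your uniqueness argument is complete and correct. (It does implicitly use that a projective transformation fixing the non-degenerate subvariety $\iota(X)$ pointwise must be the identity, which holds because $\iota$ is given by a complete linear system; this is worth stating explicitly, since the equivariance condition only constrains $\theta(f)$ on $\iota(X)$.)

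Two points. First, a minor slip: $\rho_k$ is \emph{not} faithful on $\hat{G}$ --- if the central $\cx^*$ acts by $z \mapsto z^k$, then $\mu_k$ lies in its kernel. What very ampleness actually gives is faithfulness of the induced action of $G$ on $\prj(H^0(X,L^k)^*)$, and that is what makes the eventual $\theta$ injective. Second, and more substantively, the existence half has a gap exactly where you predicted: the assertion that after replacing $L$ by $L^m$ the finite central extension $\hat{G}^{SL} \to G$ splits, \emph{simultaneously for all $k$}, is stated but not proved. Equivalently, one must produce a character $\lambda$ of $G$ with $\lambda^{N_k} = \det \rho_k$ for every $k$ at once; since $G$ may have a nontrivial character lattice (central tori) and nontrivial $\pi_1$, this is a genuine divisibility/covering-space question, and it is not a formal consequence of $\mathrm{Pic}(G)$ being torsion --- that fact only handles the linearisation into $GL(H^0(X,L^k))$, not the further normalisation into $SL(H^0(X,L^k))$. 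This missing step is precisely the content the paper itself outsources to the cited references, so your proposal should be read as an honest expansion of the citation rather than a self-contained proof.
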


In what follows, we shall replace $L$ by a large tensor power so that the above lemma holds.

\begin{remark} \label{rmntmcsbz}
It is convenient to fix a maximal compact subgroup $K$ of $\mathrm{Aut}_0 (X,L)$ once and for all. If $(X,L)$ admits an extremal metric $\omega$ we shall take $K$ to be the group of isometry of $\omega$, which is possible by a theorem of Calabi \cite{cal2}. We shall also write $Z$ for the centre of $K$.
\end{remark}

We identify $H^0 (X,L^k)$ with $\cx^N$ by fixing a basis $\left\{ s_i \right\}_i$, to have the isomorphism $\prj (H^0 (X,L^k)^*) \cong \prj^{N-1}.$

\begin{definition}
	Defining a standard Euclidean metric on $\cx^N$ which we write as the identity matrix $I$, we define the \textbf{centre of mass} associated to the basis $\underline{s} = \left\{ s_i \right\}_i$ as
\begin{equation*}
	\bar{\mu}_X (\underline{s}) : =  \int_{X}\frac{h^k_{FS} (s_i , s_j)}{\sum_l |s_l|_{FS^k}^2} \frac{k^n \omega^n_{FS}}{n!}  \in \ai \mathfrak{u} (N) ,
\end{equation*}
where $h^k_{FS} = h^k_{FS(I)}$. 
\end{definition}

\begin{remark}  \label{remdefcomitofs}
	Note that the trace of $\bar{\mu}_X (\underline{s})$ is $k^nV$, and that the equation (\ref{defoffseq}) implies that we in fact have $\bar{\mu}_X (\underline{s}) = \int_X h^k_{FS} (s_i , s_j) \frac{k^n \omega_{FS}^n}{n!}$.
\end{remark}

Recall also the following proposition, by noting that $\bar{\mu}_X (\underline{s})$ is invertible since it is positive definite.

\begin{proposition} \emph{(\cite[Proposition 4.5]{yhpreprint})} \label{invmuholo}
There exists $H \in \mathcal{B}_k$ such that $\bar{\partial} \mathrm{grad}^{1,0}_{\omega_{H}} \rho_k (\omega_{H}) = 0$ if and only if there exists a basis $\left\{ s_i \right\}_i$ for $H^0 (X,L^k)$ such that $\left( \bar{\mu}_X (\underline{s}) \right)^{-1}$ generates a holomorphic vector field on $\prj^{N-1}$ that preserves the image $\iota(X)$ of the Kodaira embedding $\iota : X \inj \prj^{N-1}$.
\end{proposition}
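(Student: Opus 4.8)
The plan is to reduce the equivalence to two ingredients: a pointwise identity expressing the Bergman function $\rho_k(\omega_H)$ as the Fubini--Study Hamiltonian of the endomorphism $\bar{\mu}_X(\underline{s})^{-1}$, and a general criterion for when the $(1,0)$-gradient of a \emph{restricted} Hamiltonian is holomorphic. First I would set up the bijection between $H \in \mathcal{B}_k$ and bases: given $H$, choose an $H$-orthonormal basis $\underline{s} = \{s_i\}$, so that the associated embedding $\iota$ together with the standard metric $I$ recovers $FS(H) = h^k_{FS}$ and $\omega_H = \omega_{FS}$. Under this choice both sides of the asserted equivalence are built from the same data, so it suffices to translate the equation $\bar{\partial}\,\mathrm{grad}^{1,0}_{\omega_H}\rho_k(\omega_H) = 0$ into a tangency condition on the vector field generated by $\bar{\mu}_X(\underline{s})^{-1}$.

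For the key identity, note that by the definition of the centre of mass (and Remark \ref{remdefcomitofs}) the $L^2(h^k_{FS},\omega_{FS})$-Gram matrix of $\underline{s}$ is exactly $k^{-n}\bar{\mu}_X(\underline{s})$. Diagonalising this Gram matrix to produce an $L^2$-orthonormal basis and summing the squared pointwise norms, I would obtain
\[
\rho_k(\omega_H) = k^n \sum_{i,j}\bigl(\bar{\mu}_X(\underline{s})^{-1}\bigr)_{ij}\, h^k_{FS}(s_i,s_j).
\]
Because $\sum_l |s_l|^2_{FS^k} = 1$ on $X$ by (\ref{defoffseq}), the right-hand side is precisely $k^n$ times the restriction to $\iota(X)$ of the Fubini--Study Hamiltonian $\mu_B$ of the holomorphic vector field $\xi_B$ on $\prj^{N-1}$ generated by the Hermitian endomorphism $B := \bar{\mu}_X(\underline{s})^{-1}$. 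Thus the quantised extremal equation becomes $\bar{\partial}\,\mathrm{grad}^{1,0}_{\omega_{FS}}(\mu_B|_{\iota(X)}) = 0$, and the whole statement reduces to showing, for Hermitian $B$, that this holds if and only if $\xi_B$ is tangent to $\iota(X)$. The direction starting from tangency is immediate: if $\xi_B$ preserves $\iota(X)$ then $\mathrm{grad}^{1,0}_{\omega_{FS}}(\mu_B|_{\iota(X)})$ equals $\xi_B^{1,0}|_{\iota(X)}$, a genuine holomorphic vector field, using that the gradient of a restricted function is the tangential projection of the ambient gradient and that this projection commutes with $J$ on a complex submanifold.

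The hard part will be the converse. Suppose $V := \mathrm{grad}^{1,0}_{\omega_{FS}}(\mu_B|_{\iota(X)})$ is holomorphic. Then $V$ is a Hamiltonian holomorphic vector field on $(X,L^k)$, so it integrates to a one-parameter subgroup of $\mathrm{Aut}_0(X,L)$; by Lemma \ref{lemdefofthtosl} this linearises to a Hermitian endomorphism $\hat{A}\in\mathfrak{gl}(H^0(X,L^k))$ whose induced field $\xi_{\hat A}$ is tangent to $\iota(X)$ and restricts to $V$. Since both $\mu_B$ and $\mu_{\hat A}$ then restrict to Hamiltonians of $V$, they differ on $\iota(X)$ by a constant $c$, whence $\sum_{i,j}(B-\hat A - cI)_{ij}\, s_i\bar s_j \equiv 0$ on $X$. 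I would finish by proving that the ``Hermitian squares'' $\{s_i\bar s_j\}$ are linearly independent: writing the relation in a local holomorphic frame as $\sum_{i,j}C_{ij}\sigma_i\bar\sigma_j = 0$ and passing to the holomorphic function $\sum_{i,j}C_{ij}\sigma_i(x)\tau_j(w)$, its vanishing along the totally real diagonal $w=\bar x$ forces it to vanish identically, and the linear independence of the $\sigma_i$ then gives $C = B - \hat A - cI = 0$. Hence $\xi_B = \xi_{\hat A}$ is tangent to $\iota(X)$. The two points requiring genuine care are the existence and Hermiticity of the lift $\hat A$ (the interaction of the real Hamiltonian with the splitting of $\mathfrak{gl}(N)$ into Hermitian and skew-Hermitian parts) and the totally-real uniqueness step; everything else is bookkeeping of normalisation constants.
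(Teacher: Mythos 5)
The paper does not actually prove this proposition; it is imported verbatim from \cite[Proposition 4.5]{yhpreprint}, so there is no internal proof to compare against. That said, your strategy --- compute the $L^2$-Gram matrix to identify $\rho_k(\omega_H)$ with a constant multiple of $\sum_{i,j}(\bar{\mu}_X(\underline{s})^{-1})_{ij}h^k_{FS}(s_i,s_j)$, i.e.\ the restricted Fubini--Study potential of $\xi_B$ with $B=\bar{\mu}_X(\underline{s})^{-1}$, and then characterise when the $(1,0)$-gradient of such a restricted potential is holomorphic --- is the natural route, and it is consistent with the machinery the cited reference builds (its Lemma 4.3, quoted here as (\ref{expfmofpsip}), is precisely the identification of $\sum M_{ij}h^k_{FS}(s_i,s_j)$ with the potential of the linearised vector field). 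The forward direction and the totally-real argument for linear independence of $\{s_i\bar{s}_j\}$ are correct. One caution: your intermediate reduction ``for Hermitian $B$, the restricted gradient is holomorphic iff $\xi_B$ is tangent'' is \emph{false} as a standalone statement about projective submanifolds --- for the line $\{z_2=0\}\subset\prj^2$ and $B$ with only the $(0,2)$ and $(2,0)$ entries nonzero, $\mu_B$ vanishes identically on the line while $\xi_B$ is not tangent --- so the linear independence of the $s_i\bar{s}_j$ (which holds here because $\underline{s}$ is a basis of $H^0(X,L^k)$) is not a final bookkeeping step but the load-bearing hypothesis.

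The one genuine soft spot is exactly the point you flag: the Hermiticity of the lift $\hat{A}$ with respect to the reference inner product $I$ for which $\underline{s}$ is orthonormal. The natural argument --- the Hamiltonian Killing field preserves $\omega_H$ and the lifted metric $h^k_{FS(I)}$, hence its linearisation preserves an inner product on $H^0(X,L^k)$ --- shows that $\hat{A}$ is Hermitian with respect to the $L^2$-product $\int_X h^k_{FS(I)}(\cdot,\cdot)\,\omega_H^n/n!$, i.e.\ with respect to $\bar{\mu}_X(\underline{s})$ itself, \emph{not} with respect to $I$; the two notions agree only if $\hat{A}$ commutes with $\bar{\mu}_X(\underline{s})$, which is essentially the conclusion you are after, so the argument as written is circular at that point. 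The clean repair is to abandon real Hamiltonians in favour of (possibly complex-valued) holomorphy potentials: for \emph{any} $M\in\mathfrak{gl}(N)$ with $\xi_M$ tangent to $\iota(X)$, restricting the identity $\iota_{\xi_M^{1,0}}\omega_{FS}=\bar{\partial}(\,\cdots)$ to the complex submanifold shows that $\sum M_{ij}h^k_{FS}(s_i,s_j)\big|_X$ is a holomorphy potential for $\xi_M^{1,0}|_X$ with respect to $\omega_H$, with no Hermiticity needed. Two holomorphy potentials of the same field differ by a constant, your injectivity step then yields $B\in\rl I+\cx\hat{A}$ directly, and the Hermiticity of $\hat{A}$ falls out a posteriori from the reality of $\rho_k(\omega_H)$. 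With that substitution your proof closes.
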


\begin{remark} \label{invmuholoxirem}
	Observe that $\left( \bar{\mu}_X (\underline{s}) \right)^{-1}$ generating a holomorphic vector field preserving $\iota(X)$ is equivalent to $\bar{\mu}_X$ satisfying the following equation
	\begin{equation} \label{invmuholoxi}
		\bar{\mu}_X (\underline{s}) = \left( cI + \xi \right)^{-1}
	\end{equation}
	for some $\xi \in \theta_* (\mathfrak{aut}(X,L))$, where $c \in \rl$ is a constant so that the trace of both sides are equal (to $k^nV$).

	Observe further that when (\ref{invmuholoxi}) is satisfied, the matrix $cI + \xi$ is positive definite hermitian since $\bar{\mu}_X$ is.
\end{remark}

When we solve the equation (\ref{dbgradrom0}) for all large enough $k$ in \cite{yhpreprint}, we prove stronger results with more detailed information on the above $\xi$ and $c$. We consider the following functional.


\begin{definition} \label{defmodbalenza}
	The \textbf{modified balancing energy} $\mathcal{Z}^A$ is defined on the space $\mathcal{B}_k$ of all positive definite hermitian matrices on $H^0 (X,L^k)$ as
	\begin{equation*}
		\mathcal{Z}^A (H(t)) = I \circ FS(H(t)) + \frac{k^n V}{N} \mathrm{tr} \left( \left( I + C_A I + \frac{A}{2 \pi k} \right)^{-1} \log H(t) \right)
	\end{equation*}
	where
	\begin{enumerate}
		\item $I (e^{-\phi_t} h_0) = -k^{n+1} \int_X \phi_t \sum_{i=1}^n (\omega_0 - \ai \ddbar \phi_t)^i \wedge \omega_0^{n-i}$, where $h_0$ is an arbitrarily chosen basepoint in $\mathcal{H} (X,L)$,
		\item $C_A \in \rl$ is some constant so that the trace of the derivative $\delta \mathcal{Z}^A$ is zero,
		\item $A$ is an element in $\theta_*(\ai \mathfrak{z})$, where $\mathfrak{z} = \mathrm{Lie} (Z)$ (cf.~Remark \ref{rmntmcsbz}).
	\end{enumerate}
\end{definition}

An important property of $\mathcal{Z}^A$ is that it is geodesically convex on $\mathcal{B}_k$ and its critical point corresponds to the solution to the equation (\ref{invmuholoxi}) with $c = 1 + C_A$ and $\xi = A / 2 \pi k$, since the linearisation $\delta \mathcal{Z}^{A}$ can be written as
\begin{equation} \label{fmlindzmmbct}
	\delta \mathcal{Z}^A (H (t)) = - \bar{\mu}_X (H(t)) + \frac{Vk^n}{N} \left( I + C_A I + \frac{A}{2 \pi k} \right)^{-1}
\end{equation}
by recalling \cite[\S 5.1]{yhpreprint}.


The main results of \cite{yhpreprint} that we need can be summarised as follows.

\begin{theorem} \emph{(\cite[Theorem 1.4, Corollary 4.15, equation (64)]{yhpreprint})}
	\label{mtqekm}
	Suppose $(X,L)$ admits an extremal metric $\omega$. Then for all $l \in \mathbb{N}$ there exists $k_l \in \mathbb{N}$ such that for all $k \ge k_l$ there exists a hermitian matrix $H_k \in \mathcal{B}_k$ and $A_k \in \theta_* (\mathfrak{z})$ such that the following hold:
	\begin{enumerate}
	\item $\delta \mathcal{Z}^{A_k} (H_k) =0$,
	\item $\omega_k := \omega_{H_k}$ satisfies $\bar{\partial} \mathrm{grad}^{1,0}_{\omega_k} \rho_k (\omega_k) = 0$ and $A_k$ is given by 
	\begin{equation*}
		A_k = \frac{V}{N} \theta_*(\mathrm{grad}_{\omega_k} \rho_k (\omega_k)),
	\end{equation*}
	with the operator norm $||A_k||_{op}$ of $A_k$ being bounded uniformly of $k$ and $C_{A_k} = O(k^{-1})$,
\item $\omega_k$ is $K$-invariant and $\omega_k \to \omega$ in $C^l$.
	\end{enumerate}
\end{theorem}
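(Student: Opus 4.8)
The plan is to realise the required $H_k$ as a critical point of the modified balancing energy $\mathcal{Z}^{A_k}$, produced by perturbing off the Bergman data associated to the given extremal metric $\omega$. By the formula (\ref{fmlindzmmbct}), solving $\delta \mathcal{Z}^{A_k}(H_k) = 0$ is equivalent to the relative balanced equation $\bar{\mu}_X(H_k) = \frac{Vk^n}{N}(I + C_{A_k} I + \frac{A_k}{2\pi k})^{-1}$, so the whole problem is recast as a perturbation/fixed-point problem for the centre of mass map $H \mapsto \bar{\mu}_X(H)$ on $\mathcal{B}_k$. Since $\mathcal{Z}^{A_k}$ is geodesically convex, any critical point is automatically its unique minimiser, and Proposition \ref{invmuholo} together with Remark \ref{invmuholoxirem} guarantees that such a point yields a solution of (\ref{dbgradrom0}) with $c = 1 + C_{A_k}$ and $\xi = A_k/2\pi k$. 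I would run the entire argument in the $K$-invariant category, so that the $K$-invariance of $\omega_k$ asserted in part (3) is automatic from the $K$-equivariance of $Hilb$, $FS$ and $\bar{\mu}_X$.

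The first substantive step is to build a sufficiently accurate approximate solution. I would feed $\omega$ into the Bergman machinery and use the Tian--Yau--Zelditch--Catlin--Lu expansion $\rho_k(\omega) = k^n + \frac{1}{2}\scal(\omega) k^{n-1} + O(k^{n-2})$, taken in all $C^l$ norms. The defining feature of an extremal metric is that $\mathrm{grad}^{1,0}_{\omega}\scal(\omega)$ is holomorphic, so $\mathrm{grad}^{1,0}_{\omega}\rho_k(\omega)$ is holomorphic up to an error of the order of the next term in the expansion; this is exactly what makes $Hilb(\omega) \in \mathcal{B}_k$ an approximate critical point of $\mathcal{Z}^{A_k}$, once $A_k \in \theta_*(\ai \mathfrak{z})$ is chosen to absorb the holomorphic part $\frac{V}{N}\theta_*(\mathrm{grad}_{\omega}\rho_k(\omega))$ arising from the extremal field and the centre $\mathfrak{z}$. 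The constant $C_{A_k}$ is then pinned down by the trace normalisation in Definition \ref{defmodbalenza}(2), and reading off the leading terms of the expansion gives both $C_{A_k} = O(k^{-1})$ and a uniform bound on $||A_k||_{op}$. The parameter $A_k$ and the metric $\omega_k$ are coupled, so this identification must be made self-consistent at the level of the exact solution.

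The hard part, and the main obstacle, is the uniform invertibility of the linearisation. Differentiating $\delta \mathcal{Z}^{A_k}$ (equivalently the map $\bar{\mu}_X$) at the approximate solution yields, after the correct rescaling, an operator modelled on the Lichnerowicz operator $\lich{\omega}$, whose kernel is precisely the space of holomorphic vector fields. I would therefore decompose $\mathcal{B}_k$, in the $K$-invariant category, into directions tangent to the $\mathrm{Aut}_0(X,L)$-orbit, handled explicitly through the group action and the parameter $A_k \in \theta_*(\ai \mathfrak{z})$, and their $L^2$-orthogonal complement. On the complement I expect a spectral lower bound for $\lich{\omega}$ that is uniform in $k$ after normalisation; transferring this bound from the smooth operator to the finite-dimensional operator on $H^0(X,L^k)$, with the errors controlled by the Bergman expansion, is the delicate technical core, since here Donaldson's estimates for the discrete-automorphism case must be upgraded to the reductive, extremal setting.

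With a good approximate solution and a uniformly bounded inverse on the complement in hand, I would close the argument by a quantitative implicit function theorem, or equivalently a Newton-type iteration on $\mathcal{B}_k$: the smallness of the error relative to the operator norm of the inverse, together with control of the quadratic nonlinearity, produces an exact critical point $H_k$ at a controlled distance from $Hilb(\omega)$. Part (1) is then immediate; part (2) follows by reading $A_k = \frac{V}{N}\theta_*(\mathrm{grad}_{\omega_k}\rho_k(\omega_k))$ off the critical point equation via (\ref{fmlindzmmbct}); and the $C^l$ convergence $\omega_k \to \omega$ in part (3) follows by bootstrapping the quantitative bounds, starting from the convergence of the Bergman metrics $\omega_{Hilb(\omega)} \to \omega$.
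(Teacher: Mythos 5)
First, note that Theorem \ref{mtqekm} is not proved in this paper at all: it is imported verbatim from \cite{yhpreprint} (Theorem 1.4, Corollary 4.15 and equation (64) there), so there is no in-paper proof to compare your proposal against. Your outline does match the strategy of the cited work --- a Donaldson-style quantitative implicit function theorem run in the $K$-invariant category, with the automorphism directions absorbed into the parameter $A_k \in \theta_*(\ai\mathfrak{z})$ via the modified balancing energy, and the exact solution characterised through (\ref{fmlindzmmbct}) and Proposition \ref{invmuholo}. So the architecture is right.

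However, as a proof the proposal has two concrete gaps. First, the approximate solution you propose, namely $Hilb(h)$ for the fixed extremal metric $\omega$, is not accurate enough to close a Newton iteration: the inverse of the linearised operator (modelled on $\lich{\omega}$ on the orthogonal complement of the holomorphic directions) costs powers of $k$ when transferred to the finite-dimensional setting, while the error of the zeroth-order approximation is only one order of $k^{-1}$ better than the leading term --- the higher Bergman coefficients beyond $S(\omega)$ have no reason to have holomorphic gradients. One must therefore construct approximate solutions to arbitrarily high order by perturbing the metric itself, $\omega + \ai\ddbar(\eta_1 k^{-1} + \cdots + \eta_m k^{-m})$, with the $\eta_j$ obtained by inverting $\lich{\omega}$ modulo holomorphy obstructions that are in turn absorbed into $A_k$; your sketch acknowledges neither this nor the resulting order-by-order determination of $A_k$ and $C_{A_k}$, which is where the uniform bound on $\|A_k\|_{op}$ and the estimate $C_{A_k}=O(k^{-1})$ actually come from. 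Second, the step you correctly identify as ``the delicate technical core'' --- the uniform (in $k$, after rescaling) lower bound for the linearisation on the complement of the orbit directions, together with the self-consistent coupling of $A_k$ to the exact solution $H_k$ --- is only named, not carried out; since this is precisely the content that distinguishes the theorem from its discrete-automorphism antecedent, the proposal remains a roadmap rather than a proof.
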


\section{Relative Chow stability and related concepts} \label{stextmfdxl}

\subsection{Chow stability} \label{stextmfdxlchs}
This is a review of the classical theory, and we refer the reader to \S 1.16 of Mumford's paper \cite{mumford77} and \S 2 of Futaki's survey \cite{fut11} for the details on the materials presented here. Consider a polarised \kah manifold $(X,L)$ with $\dim_{\cx} X=n$ and degree $d_k : = \int_X c_1(L^k)^n$, and the Kodaira embedding $\iota : X \inj \prj ( H^0 (X,L^k)^*)$. Writing $V_k:= H^0(X,L^k)$, observe that $n+1$ points $H_1 , \dots , H_{n+1}$ in $\prj(V_k)$ determines $n+1$ divisors in $\prj(V^*_k)$, and that
\begin{equation*}
\{ (H_1 , \dots , H_{n+1}) \in \prj(V_k) \times \cdots \times \prj(V_k)  \mid H_1 \cap \cdots \cap H_{n+1} \cap \iota (X) \neq \emptyset \text{ in } \prj (V_k^*) \}
\end{equation*}
is a divisor in $\prj(V_k) \times \cdots \times \prj(V_k) $. The polynomial $ \Phi_{X,k} \in ( \mathrm{Sym}^{d_k} (V^*_k))^{\otimes (n+1)}$ defining this divisor, or the point $[ \Phi_{X, k} ]$ in $\prj( (\mathrm{Sym}^{d_k} (V^*_k))^{\otimes (n+1)})$ is called the \textbf{Chow form} of $X \inj \prj (H^0 (X,L^k)^*)$. It is a classical fact \cite{kollar,mumford77} that $[ \Phi_{X,k} ]$ corresponds bijectively to a subvariety in $\prj (H^0 (X,L^k)^*)$ of dimension $n$ and degree $d_k$. 

Chow stability of $(X,L)$ is nothing but the GIT stability of the point $[ \Phi_{X,k} ] \in \prj(( \mathrm{Sym}^{d_k} (V^*_k))^{\otimes (n+1)})$ with respect to the $SL(V_k^*)$-action on $ (\mathrm{Sym}^{d_k} (V^*_k))^{\otimes (n+1)}$. More precisely, it can be defined as follows.

\begin{definition} \label{defchstpsssus}
A polarised \kah manifold $(X,L)$ is said to be:
\begin{enumerate}
\item \textbf{Chow polystable at the level $k$} if the $SL(V_k^*)$-orbit of $\Phi_{X,k}$ is closed in $ (\mathrm{Sym}^{d_k} (V^*_k))^{\otimes (n+1)}$,
\item \textbf{Chow stable at the level $k$} if it is Chow polystable and $\Phi_{X,k}$ has finite isotropy,
\item \textbf{Chow semistable at the level $k$} if the $SL(V_k^*)$-orbit of $\Phi_{X,k}$ does not contain $0 \in (\mathrm{Sym}^{d_k} (V^*_k))^{\otimes (n+1)}$,
\item \textbf{Chow unstable at the level $k$} if it is not Chow semistable,
\item \textbf{asymptotically Chow stable} (resp.~\textbf{polystable}, \textbf{semistable}) if there exists $k_0 \in \mathbb{N}$ such that it is Chow stable (resp.~polystable, semistable) at the level $k$ for all $k \ge k_0$.
\end{enumerate}
\end{definition}

We recall the following fundamental theorem.

\begin{theorem} \emph{(Luo \cite{luo}, Zhang \cite{zhang})} \label{lzcsbalmauttriv}
	Suppose that $\mathrm{Aut}_0 (X,L)$ is trivial. Then $(X,L)$ is Chow stable at the level $k$ if and only if there exists $H \in \mathcal{B}_k$ such that $\rho_k (\omega_{H}) = \text{const}$. Such a \kah metric $\omega_H$ is called a \textbf{balanced} metric.
\end{theorem}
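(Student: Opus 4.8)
Since $\mathrm{Aut}_0(X,L)$ is trivial its centre $Z$ is trivial, so $\mathfrak{z}=0$ and the only admissible choice in Definition \ref{defmodbalenza} is $A=0$ (with $C_A=0$). My plan is to run the Kempf--Ness/GIT machinery for the $SL(V_k^*)$-action on the Chow point, using the balancing energy $\mathcal{Z}^0$ as the associated Kempf--Ness functional. Recall from \S\,\ref{bgoqtst} that $\mathcal{Z}^0$ is geodesically convex on $\mathcal{B}_k \cong GL(N,\cx)/U(N)$ along the geodesics $t\mapsto e^{tB}\cdot H$ with $B$ Hermitian, and that its linearisation (\ref{fmlindzmmbct}) with $A=0$ reads $\delta\mathcal{Z}^0(H) = -\bar{\mu}_X(H) + \frac{Vk^n}{N}I$. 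Hence the critical points of $\mathcal{Z}^0$ are exactly the $H$ with $\bar{\mu}_X(H) = \frac{Vk^n}{N}I$; unwinding the definition of $\bar{\mu}_X$ via an $H$-orthonormal basis (Remark \ref{remdefcomitofs}), this is equivalent to $\rho_k(\omega_H)=\text{const}$. So critical points of $\mathcal{Z}^0$ are precisely balanced metrics, and it remains to tie the existence of such a critical point to Chow stability.

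The decisive step is to identify $\mathcal{Z}^0$, up to an additive constant and a fixed positive scaling, with the logarithm $g\mapsto \log\|g\cdot\Phi_{X,k}\|^2$ of the Chow norm of the Chow form under the $SL(V_k^*)$-action, for a suitable $U(N)$-invariant Hermitian norm $\|\cdot\|$ on $(\mathrm{Sym}^{d_k}(V_k^*))^{\otimes(n+1)}$. Granting this, the derivative of $\log\|g\cdot\Phi_{X,k}\|^2$ along $t\mapsto e^{tB}$ computes the pairing of the moment map for the $SU(N)$-action at $\Phi_{X,k}$ with $B$, so that $\delta\mathcal{Z}^0=0$ is exactly the vanishing of the traceless part of that moment map. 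This identification is the analytic heart of the theorem: one expresses the variation of $\log\|g\cdot\Phi_{X,k}\|^2$ as a fibre integral over $\iota(X)$ against the Fubini--Study density and recognises the result as $\bar{\mu}_X$, using the explicit description of $\Phi_{X,k}$ as the defining polynomial of the resultant divisor.

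With this bridge in place, both implications follow from the Kempf--Ness theorem, which asserts that the $SL(V_k^*)$-orbit of $\Phi_{X,k}$ is closed if and only if it contains a zero of the moment map. For the backward implication, a balanced $H$ yields a critical point of $\mathcal{Z}^0$, equivalently a zero of the moment map in the orbit; by Kempf--Ness the orbit is then closed, i.e.\ $(X,L)$ is Chow polystable at level $k$. Triviality of $\mathrm{Aut}_0(X,L)$ forces the connected stabiliser of $\Phi_{X,k}$ to be trivial, via the embedding $\theta$ of Lemma \ref{lemdefofthtosl}, so the isotropy is finite and polystability upgrades to stability. For the forward implication, Chow stability gives a closed orbit, which by Kempf--Ness contains a zero of the moment map; the corresponding $H\in\mathcal{B}_k$ is balanced. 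Geodesic convexity of $\mathcal{Z}^0$ moreover shows that any such critical point is the global minimum and is unique modulo $U(N)$, yielding uniqueness of the balanced metric.

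The main obstacle is the identification of the second paragraph: showing that the first variation of the log Chow norm along $t\mapsto e^{tB}$ equals, up to a fixed positive constant, the pairing of $B$ with $\bar{\mu}_X(H)-\frac{Vk^n}{N}I$. This is the technical core of the Luo--Zhang theorem and rests on the transformation law of the Chow form together with a fibre integral of the Fubini--Study density over $\iota(X)$; one would either reproduce this computation from \cite{luo,zhang} or import it directly. A secondary subtlety concerns the scaling direction: $\mathcal{Z}^0$ carries the trace term $\frac{k^nV}{N}\tr(\log H)$ precisely so as to be well-posed on the $SL(V_k^*)$-orbits, and one must verify both that triviality of $\mathrm{Aut}_0(X,L)$ yields a genuinely finite stabiliser of the Chow point and that strict convexity transverse to the scaling direction gives the stated uniqueness.
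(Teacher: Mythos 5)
The paper does not actually prove this theorem: it is quoted verbatim from Luo and Zhang, and the only comment the paper makes is the remark after Theorem \ref{mabpbwcst} that Theorems \ref{lzcsbalmauttriv}, \ref{mabpbcprelt} and \ref{mabpbwcst} ``can be proved by formulating (relative) Chow stability in terms of test configurations and using explicit formulae of the modified balancing energy'', with details deferred elsewhere. So there is no in-paper argument to compare against; what you have written is the standard Kempf--Ness/Chow-norm route, which is essentially the original Luo--Zhang strategy rather than the test-configuration reformulation the author hints at.

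Within that strategy your outline is structurally correct: the identification of critical points of $\mathcal{Z}^0$ with balanced metrics via (\ref{fmlindzmmbct}) and Remark \ref{remdefcomitofs} is right (with $A=0$, $C_A=0$ forced by $\mathfrak{z}=0$), and the reduction of both implications to Kempf--Ness plus finiteness of the stabiliser of $\Phi_{X,k}$ is the correct skeleton. The honest objection is that the step you label ``the analytic heart'' --- that the derivative of $\log\|e^{tB}\cdot\Phi_{X,k}\|^2$ equals, up to a positive constant, $\mathrm{tr}\bigl(B\bigl(\bar{\mu}_X(H)-\tfrac{Vk^n}{N}I\bigr)\bigr)$ --- \emph{is} the content of the Luo--Zhang theorem; deferring it to \cite{luo,zhang} means your proposal is a correct framing of the proof rather than a proof. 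Two smaller points you should make explicit if you flesh this out: (i) in the forward direction, Kempf--Ness requires that the norm functional attain its infimum on the closed orbit, which uses properness modulo the (finite) stabiliser; and (ii) the claim that triviality of $\mathrm{Aut}_0(X,L)$ gives finite isotropy of $\Phi_{X,k}$ needs the observation that the connected stabiliser of the Chow point in $SL(V_k^*)$ consists of projective linear maps preserving $\iota(X)$ and hence lands in $\theta(\mathrm{Aut}_0(X,L^k))=\theta(\mathrm{Aut}_0(X,L))$, which uses linear normality of the embedding. Neither is a fatal gap, but both are doing real work in the equivalence between ``polystable with finite isotropy'' and ``stable'' as defined in Definition \ref{defchstpsssus}.
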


%


\subsection{Chow polystability relative to a torus} \label{scpsreltt}
We now review the version of Chow stability which is ``relative'' to the automorphism group $G= \mathrm{Aut}_0 (X,L)$, as introduced by Mabuchi \cite{mab04ext}. The reader is referred to the survey given in  Apostolov--Huang \cite{ah} for further discussions. Since we have $\theta$ as in Lemma \ref{lemdefofthtosl}, choosing a real torus $T$ in $K$, we can consider the representation $\theta |_{T^c} : T^c \actson H^0(X,L^k)$ where $T^c$ is the complexification of $T$. We then consider a subspace
\begin{equation*}
V_k (\chi) := \{ s \in H^0 (X,L^k) \mid \theta (t) \cdot s = \chi(t) s \text{ for all } t \in T^c \}
\end{equation*}
of $H^0 (X,L^k)$, where $\chi \in \mathrm{Hom} (T^c , \cx^*)$ is a character. We then have a decomposition
\begin{equation} \label{dech0bchoft}
H^0 (X,L^k) = \bigoplus_{\nu =1}^r V_k (\chi_{\nu})
\end{equation}
for mutually distinct characters $\chi_1 , \dots , \chi_r \in \mathrm{Hom} (T^c , \cx^*)$. We define 
\begin{equation*}
G^c_T :=  \left\{ \mathrm{diag} (A_1 , \dots , A_r) \in \prod_{\nu =1}^r GL(V_k (\chi_{\nu})) \left|  \prod_{\nu=1}^r \det(A_{\nu}) =1  \right. \right\}     
\end{equation*}
for the ``elements in $SL(H^0 (X,L^k))$ that commute with the $T^c$-action'', and
\begin{equation*}
G^c_{T^{\perp}} := \left\{ \mathrm{diag} (A_1 , \dots , A_r) \in \prod_{\nu =1}^r GL(V_k (\chi_{\nu})) \left|  \prod_{\nu=1}^r \det(A_{\nu})^{1 + \log |\chi_{\nu} (t)|} =1 \text{ for all } t \in T^c  \right. \right\} 
\end{equation*}
for the ``subgroup of $G^c_T$ that is orthogonal to the $T^c$-action''. 

\begin{remark}
	Write $\mathfrak{g}$ for $\theta_* (\mathfrak{aut}(X,L)) $ which is a Lie subalgebra of $ \mathfrak{sl} (H^0(X,L^k))$. Following \cite[\S 1.3]{szethesis}, we define
	\begin{equation*}
		\mathfrak{g}_T := \left\{ \alpha \in \mathfrak{g} \mid [\alpha , \beta ] = 0 \text{ for all } \beta \in \mathrm{Lie}T^{c} \right\}
	\end{equation*}
	where $[,]$ is the commutator, and
	\begin{equation*}
		\mathfrak{g}_{T^{\perp}} := \left\{ \alpha \in \mathfrak{g}_T \mid \langle \alpha , \beta \rangle = 0 \text{ for all } \beta \in \mathrm{Lie}T^{c} \right\}
	\end{equation*}
	where $\langle , \rangle$ is the natural inner product on $\mathfrak{sl} (H^0(X,L^k))$.
	
	Direct computation shows that $G^c_T$ is the connected Lie group corresponding to $\mathfrak{g}_T$, and $G^c_{T^{\perp}}$ to $\mathfrak{g}_{T^{\perp}}$.
\end{remark}

We now define the relative Chow stability as follows.

\begin{definition} \label{defwcpsalkrelttpcs}
A polarised \kah manifold $(X,L)$ is said to be \textbf{Chow polystable at the level $k$ relative to $T$} if the $G^c_{T^{\perp}}$-orbit of $\Phi_{X,k}$ is closed in $ (\mathrm{Sym}^{d_k} (V^*_k))^{\otimes (n+1)}$.
\end{definition}

On the other hand, we can consider an action of a smaller group $\widetilde{G_{T^{\perp}}^c}: = \prod_{\nu =1}^r SL(V_k (\chi_{\nu}))$; observe $\widetilde{G_{T^{\perp}}^c} \le G^c_{T^{\perp}}$. This leads to the notion of ``weak'' relative Chow polystability as follows (cf.~\cite{mab04ext, ah}).

\begin{definition} \label{defwcpsalkreltt}
A polarised \kah manifold $(X,L)$ is said to be \textbf{weakly Chow polystable at the level $k$ relative to $T$} if the $\widetilde{G_{T^{\perp}}^c}$-orbit of $\Phi_{X,k}$ is closed in $ (\mathrm{Sym}^{d_k} (V^*_k))^{\otimes (n+1)}$.
\end{definition}

In the case $\mathrm{Aut}_0 (X,L)$ is trivial, Chow stability corresponds to the existence of balanced metrics, as proved by Luo \cite{luo} and Zhang \cite{zhang} (cf.~Theorem \ref{lzcsbalmauttriv}). The notion of ``balanced'' metrics in the relative setting was proposed by Mabuchi \cite{mab04ext} as follows.

\begin{definition}
A hermitian metric $h \in \mathcal{H} (X,L)$ is said to be \textbf{balanced at the level $k$ relative to $T$} if $h$ is $T$-invariant and satisfies the following property: writing $\{ s_{\nu, i} \}_{\nu , i}$ for a $Hilb(h)$-orthonormal basis for $H^0 (X,L^k)$, where each $\{ s_{\nu ,i} \}_{i}$ is a $Hilb(h)$-orthonormal basis for $V_k (\chi_{\nu})$, there exist positive constants $(b_1 , \dots , b_r)$, $b_{\nu} >0$, such that
\begin{equation*}
\sum_{\nu , i} b_{\nu} |s_{\nu ,i}|^2_{h^k} = 1.
\end{equation*}
\end{definition}

A fundamental theorem is the following.

\begin{theorem}\emph{(Mabuchi \cite{mab04ext, mab11}; see also \cite[Theorems 2 and 4]{ah})} \label{mabpbcprelt}
$(X,L)$ is Chow polystable at the level $k$ relative to $T$ if and only if it admits a hermitian metric balanced relative to $T$ with each $b_{\nu}$ satisfying
\begin{equation} \label{eqofbnuchstrelt}
b_{\nu} = 1 + \log |\chi_{\nu} (t)| 
\end{equation}
for some $t \in T^c$, i.e. $b_{\nu}$'s are the eigenvalues of $I+ \xi$ for some $ \xi \in \theta_* (\mathrm{Lie}(T^c))$.
\end{theorem}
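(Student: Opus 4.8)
The plan is to recast the statement as an instance of the Kempf--Ness correspondence, exactly as in the non-equivariant case (Theorem \ref{lzcsbalmauttriv}), but carried out for the reductive group $G^c_{T^{\perp}}$ in place of $SL(V_k^*)$. First I would recall the general GIT principle: for a reductive group $H$ with maximal compact $K_H$ acting linearly on a hermitian vector space $W$, the $H$-orbit of a point $w$ is closed if and only if the orbit meets a zero of the moment map $\mu_{K_H}$, equivalently a critical point (hence minimum) of the Kempf--Ness function $g \mapsto \log \| g \cdot w \|$. Applying this to $H = G^c_{T^{\perp}}$ acting on $(\mathrm{Sym}^{d_k}(V_k^*))^{\otimes (n+1)}$ at $w = \Phi_{X,k}$, the substance of the proof reduces to (i) identifying this moment map for the Chow form and (ii) matching its vanishing locus with the relatively balanced condition carrying the prescribed weights $b_\nu$.

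For step (i) I would start from the classical computation underlying Theorem \ref{lzcsbalmauttriv} (Luo \cite{luo}, Zhang \cite{zhang}): the moment map for the full $SL(V_k^*)$-action at the Chow point, evaluated at the Fubini--Study metric $FS(H)$, is the trace-free part of $\bar{\mu}_X(\underline{s}) - \frac{k^n V}{N} I$, where $\underline{s}$ is an $H$-orthonormal basis. Restricting the action to the subgroup $G^c_{T^{\perp}}$ composes this with the orthogonal projection onto $\mathfrak{k}_{T^{\perp}}$, the compact form of $\mathfrak{g}_{T^{\perp}}$; the key point is the two-layer structure of this projection. Since a $T$-invariant metric makes the weight spaces $V_k(\chi_\nu)$ mutually orthogonal, $\bar{\mu}_X$ is block-diagonal. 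Vanishing of the component along $\mathrm{Lie}(\widetilde{G_{T^{\perp}}^c}) = \bigoplus_\nu \mathfrak{su}(V_k(\chi_\nu))$ forces $\bar{\mu}_X$ to be a scalar $m_\nu I$ on each block, which is precisely the relatively balanced condition with (a priori arbitrary) constants; vanishing of the component along the remaining scalar-block directions of $\mathfrak{g}_{T^{\perp}}$ then constrains the vector $(m_\nu)_\nu$.

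For step (ii) I would unwind this latter constraint. The directions of $\mathfrak{g}_{T^{\perp}}$ beyond the block-traceless ones consist of scalar-block elements $\mathrm{diag}(\lambda_\nu I)$ subject to the weighted-trace relations coming from $\prod_\nu \det(A_\nu)^{1+\log|\chi_\nu(t)|} = 1$; orthogonality of $(m_\nu)_\nu$ to all such $(\lambda_\nu)_\nu$ (in the $\dim$-weighted inner product) says exactly that $(m_\nu)_\nu$ lies in the span of $(1,\dots,1)$ and $(\log|\chi_\nu(t)|)_\nu$, i.e. $m_\nu$ is affine in $\log|\chi_\nu(t)|$. Translating this block-eigenvalue statement back through the definition of $\bar{\mu}_X$ and the Fubini--Study equation (\ref{defoffseq}), and fixing the normalisation by matching traces, yields precisely a metric balanced relative to $T$ with $b_\nu = 1 + \log|\chi_\nu(t)|$, i.e. the eigenvalues of $I + \xi$ for $\xi \in \theta_*(\mathrm{Lie}(T^c))$. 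The geodesic convexity of the Kempf--Ness function on the symmetric space $G^c_{T^{\perp}}/K_{T^{\perp}}$ (parallel to the convexity of $\mathcal{Z}^A$ recalled after Definition \ref{defmodbalenza}) then promotes a moment-map zero to a genuine minimiser, giving the equivalence with orbit closedness in both directions.

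The step I expect to be the main obstacle is step (ii): pinning down the precise affine normalisation so that the weights come out as $1 + \log|\chi_\nu(t)|$ rather than an arbitrary affine reparametrisation, and correctly relating the block-eigenvalues $m_\nu$ of the centre of mass to the balancing constants $b_\nu$ under the $Hilb$ and $FS$ maps (one must rescale the basis by $\sqrt{b_\nu}$ on each block and track how this deforms $H$). It is exactly the weighted-determinant constraint defining $G^c_{T^{\perp}}$ --- as opposed to the simpler $\widetilde{G_{T^{\perp}}^c}$, whose moment map only sees the block-traceless part and hence only yields weak relative balancing with unconstrained $b_\nu$ --- that selects the correct one-parameter family and forces the ``$1 + \log|\chi_\nu(t)|$'' form. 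Care is also needed here not to conflate this \emph{linear} balancing equation with the inverse centre-of-mass relation $\bar{\mu}_X = (cI+\xi)^{-1}$ of Remark \ref{invmuholoxirem}, which governs the genuinely different quantised extremal setting.
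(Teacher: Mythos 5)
The paper itself offers no proof of Theorem \ref{mabpbcprelt}: it is imported verbatim from Mabuchi \cite{mab04ext,mab11} and \cite[Theorems 2 and 4]{ah}, and the only in-paper comment on its proof is a remark that it \emph{could} alternatively be derived by recasting relative Chow stability in terms of test configurations and the explicit formulae for the modified balancing energy of Definition \ref{defmodbalenza}, with details deferred elsewhere. So there is no internal argument to compare against; what can be said is that your Kempf--Ness outline is essentially the standard route taken in the cited references, and it is sound in its main lines. The identification of the $SU(N)$-moment map at the Chow point with the trace-free part of $\bar{\mu}_X(\underline{s}) - \frac{k^nV}{N}I$ is Zhang's computation underlying Theorem \ref{lzcsbalmauttriv}; block-diagonality of $\bar{\mu}_X$ for a $T$-invariant metric follows from averaging over $T$ (distinct characters of the compact torus are orthogonal), which you use implicitly and should state; and your analysis of the extra weighted-determinant constraint is correct: the block-trace vectors of $\mathfrak{g}_{T^{\perp}}$ sweep out exactly the orthogonal complement of the span of $(1,\dots,1)$ and $(\log|\chi_\nu(t)|)_\nu$, so vanishing of the residual moment-map component places $(m_\nu)_\nu$ in that span, and the trace normalisation $\mathrm{tr}\,\bar{\mu}_X = k^nV$ together with the $\sqrt{b_\nu}$ rescaling of the basis (exactly as in the paper's proof of Corollary \ref{cahrelmmwcst}) converts this into $b_\nu = 1+\log|\chi_\nu(t)|$. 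Your closing caution about not conflating this with the inverse relation $\bar{\mu}_X = (cI+\xi)^{-1}$ of Remark \ref{invmuholoxirem} is also exactly the right distinction to draw.

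Three points would need to be made precise to turn this into a proof. First, one must fix a $T$-invariant reference hermitian inner product on $H^0(X,L^k)$ and work in the symmetric space $G^c_{T^{\perp}}/K_{T^{\perp}}$ of $T$-invariant metrics; otherwise the block structure and the compact form $K_{T^{\perp}}$ are not even defined. Second, the direction ``closed orbit $\Rightarrow$ relatively balanced metric'' is not supplied by geodesic convexity alone (convexity gives the converse, critical point $\Rightarrow$ minimiser $\Rightarrow$ closed orbit); it requires the genuine Kempf--Ness statement that the norm functional attains its infimum on a closed orbit, for which one must also verify that $G^c_{T^{\perp}}$ is reductive (it is, being cut out of a product of general linear groups by finitely many determinant characters). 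Third, the normalisation matching in your step (ii) is where sign and scaling errors are easiest to make, and the safest check is that your formula must reduce, on the trivial torus, to Theorem \ref{lzcsbalmauttriv} with all $b_\nu = 1$.
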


\begin{corollary} \label{cahrelmmwcst}
\emph{(cf.~\cite[\S 2]{ah})}
$(X,L)$ is Chow polystable at the level $k$ relative to $T$ if and only if there exists a $T$-invariant basis $\underline{s}$ for $H^0 (X, L^k)$ such that 
\begin{equation*}
	\bar{\mu}_X (\underline{s}) = \frac{Vk^n}{N} I +  \xi
\end{equation*}
for some $\xi \in \theta_* (\ai \mathrm{Lie}(T))$. In other words, the trace free part of $\bar{\mu}_X (\underline{s})$ generates a holomorphic automorphism of $\prj^{N-1}$ which preserves the image of $X$ under the Kodaira embedding.
\end{corollary}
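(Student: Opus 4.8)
The plan is to read the statement as a dictionary between the balanced condition of Theorem~\ref{mabpbcprelt} and the centre of mass $\bar{\mu}_X(\underline{s})$, so that the corollary becomes a formal consequence. The two computational facts I would rely on are, first, the identity from Remark~\ref{remdefcomitofs} that for the basis $\underline{s}$ (which is automatically $FS(I)$-normalised, $I$ denoting the metric making $\underline{s}$ orthonormal) one has $\bar{\mu}_X(\underline{s}) = \int_X h^k_{FS}(s_i,s_j)\frac{k^n\omega^n_{FS}}{n!}$; and second, the relation $Hilb(h) = \frac{N}{Vk^n}\bar{\mu}_X(\underline{s})$, obtained by comparing this with the definition of $Hilb$, where the right-hand side is regarded as the Gram matrix of $\underline{s}$. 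Throughout, a $T$-invariant basis means one adapted to the weight decomposition (\ref{dech0bchoft}), each basis vector lying in some $V_k(\chi_\nu)$, and I would record the dictionary entry that $\xi = \theta_*(\eta) \in \theta_*(\ai\mathrm{Lie}(T))$ acts on the block $V_k(\chi_\nu)$ as the real scalar $\log|\chi_\nu(t)|$, where $t = \exp(\eta)\in T^c$; this is precisely the content of the phrase ``$b_\nu$'s are the eigenvalues of $I + \xi$''.

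For the forward implication I would start from a metric $h$ balanced relative to $T$ as provided by Theorem~\ref{mabpbcprelt}, with $Hilb(h)$-orthonormal adapted basis $\{s_{\nu,i}\}$ and constants $b_\nu = 1 + \log|\chi_\nu(t)|$ satisfying $\sum_{\nu,i} b_\nu|s_{\nu,i}|^2_{h^k}=1$. Rescaling to $t_{\nu,i} := \sqrt{b_\nu}\, s_{\nu,i}$ makes $\underline{t}$ an $FS$-normalised $T$-invariant basis with $h = FS(H_{\underline{t}})$, $H_{\underline{t}}$ being the matrix making $\underline{t}$ orthonormal. Plugging $\underline{t}$ into the first identity and using $\int_X h^k(s_{\nu,i},s_{\mu,j})\frac{\omega^n_h}{n!} = \frac{V}{N}\delta_{\nu\mu}\delta_{ij}$ ($Hilb(h)$-orthonormality) yields $\bar{\mu}_X(\underline{t}) = \frac{Vk^n}{N}\,\mathrm{diag}(b_\nu I_{m_\nu})$ with $m_\nu := \dim_{\cx}V_k(\chi_\nu)$, which by the dictionary above equals $\frac{Vk^n}{N}I + \xi$ for the element $\xi\in\theta_*(\ai\mathrm{Lie}(T))$ having eigenvalue $\frac{Vk^n}{N}\log|\chi_\nu(t)|$ on $V_k(\chi_\nu)$. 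For the converse I would run this backwards: given the centre of mass equation, the second identity gives $Hilb(h) = \frac{N}{Vk^n}\left(\frac{Vk^n}{N}I+\xi\right)$, which is block-scalar with eigenvalue $b_\nu := 1 + \frac{N}{Vk^n}\log|\chi_\nu(t)|$ on $V_k(\chi_\nu)$, and here $h = FS(I)$ is $T$-invariant because $T$ is compact and acts by characters of modulus one. Taking $\tilde{s}_{\nu,i} := s_{\nu,i}/\sqrt{b_\nu}$ as a $Hilb(h)$-orthonormal adapted basis, the $FS$-normalisation $\sum_{\nu,i}|s_{\nu,i}|^2_{h^k}=1$ turns into $\sum_{\nu,i} b_\nu|\tilde{s}_{\nu,i}|^2_{h^k}=1$, exhibiting $h$ as balanced relative to $T$ with $b_\nu$ of the required form, so Theorem~\ref{mabpbcprelt} applies.

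Finally, the ``in other words'' clause follows for free: since $\mathrm{tr}\,\bar{\mu}_X(\underline{s}) = k^nV = \mathrm{tr}\left(\frac{Vk^n}{N}I\right)$, the trace-free part of $\bar{\mu}_X(\underline{s})$ is exactly $\xi$, and membership in $\theta_*(\ai\mathrm{Lie}(T))\subset\theta_*(\mathfrak{aut}(X,L))$ means, via Lemma~\ref{lemdefofthtosl}, that $\xi$ generates a holomorphic vector field on $\prj^{N-1}$ preserving the image $\iota(X)$.

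I expect the main obstacle to be bookkeeping rather than conceptual: keeping the normalising constant $\frac{Vk^n}{N}$ consistent across the $Hilb$/$FS$ duality and the two different normalisations of the basis ($Hilb(h)$-orthonormal versus $FS$-normalised), and verifying carefully that the Hermitian scalars $\log|\chi_\nu(t)|$ arising as eigenvalues of $\xi$ genuinely land in $\theta_*(\ai\mathrm{Lie}(T))$ rather than the larger $\theta_*(\mathrm{Lie}(T^c))$, so that the rescaling $t\mapsto t^{N/(Vk^n)}$ absorbing the constant stays inside $T^c$.
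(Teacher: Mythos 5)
Your proposal is correct and follows essentially the same route as the paper's proof: both directions proceed by rescaling the adapted basis by $\sqrt{b_{\nu}}$ (resp.\ $\sqrt{b_{\nu}^{-1}}$) to pass between a $Hilb(h)$-orthonormal basis and an $FS$-normalised one, then reading off the block-scalar form of $\bar{\mu}_X$ via Remark \ref{remdefcomitofs}. Your explicit bookkeeping of the $\frac{Vk^n}{N}$ normalisation and the absorption of it into $t \mapsto t^{N/(Vk^n)}$ is handled in the paper by simply defining $\xi := \frac{Vk^n}{N}\log\theta(t)$, but the content is the same.
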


\begin{proof}
	Suppose that we have a metric balanced at the level $k$ relative to $T$, satisfying $\sum_{\nu , i} b_{\nu} |s_{\nu ,i}|^2_{h^k} = 1$ with $b_{\nu}$'s satisfying (\ref{eqofbnuchstrelt}). We then see that $h$ can be written as $h = FS(H)$ with $H$ having $\underline{s'} = \{ \sqrt{b_{\nu} } s_{\nu, i} \}_{ \nu, i}$ as its orthonormal basis (cf.~equation (\ref{defoffseq})), and that $H$ is $T$-invariant (cf.~Definition 1 of \cite{ah} and the argument that follows; see also \cite[\S 2.3]{yhpreprint}). Then, the centre of mass $\bar{\mu}_X (\underline{s'})$ with respect to this basis can be computed as
\begin{align*}
\bar{\mu}_X (\underline{s'}) &= \frac{Vk^n}{N} I +  \frac{Vk^n}{N} \mathrm{diag} (\log |\chi_{1} (t)| \mathrm{id}_{V_k (\chi_1)}, \dots , \log |\chi_{r} (t)| \mathrm{id}_{V_k (\chi_r)}) \\
&=  \frac{Vk^n}{N} I + \frac{Vk^n}{N} \log \theta(t),
\end{align*}
and we simply define $\xi : = \frac{Vk^n}{N} \log \theta (t) \in \theta_* (\ai \mathrm{Lie}(T))$. 

Conversely, writing $A = \frac{Vk^n}{N} \log \theta (t)$ for some $t \in T^c / T$, suppose that we have a $T$-invariant basis $\underline{s}'$ such that $\bar{\mu}_X (\underline{s}') = \frac{Vk^n}{N} I +  \frac{Vk^n}{N} \log \theta(t)$. Diagonalising $\log \theta (t)$, and defining $b_{\nu}$'s as in (\ref{eqofbnuchstrelt}), we see that $\{ \sqrt{b^{-1}_{\nu}} s'_{\nu, i} \}_{\nu , i}$ is a $Hilb(h)$-orthonormal basis, when $\{ s'_{\nu, i} \}_{\nu , i}$ is an $H$-orthonormal basis. We thus get
\begin{equation*}
1= \sum_{\nu , i}  | s'_{\nu ,i}|^2_{h^k} = \sum_{\nu , i}  b_{\nu} \left| \sqrt{b^{-1}_{\nu}}s'_{\nu ,i} \right|^2_{h^k}
\end{equation*}
as required, for $h = FS(H)$, which is $T$-invariant by \cite[\S 2.3]{yhpreprint}.
\end{proof}

We now recall the following ``weak'' version of the preceding Theorem \ref{mabpbcprelt}.

\begin{theorem} \label{mabpbwcst}
\emph{(Mabuchi \cite{mab04ext, mab11}; see also the discussion preceding Definition 5 of \cite{ah})}
$(X,L)$ is weakly Chow polystable at the level $k$ relative to $T$ if and only if it admits a hermitian metric balanced relative to $T$ with some $b_{\nu} >0$, not necessarily satisfying (\ref{eqofbnuchstrelt}). 
\end{theorem}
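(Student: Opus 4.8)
The plan is to deduce this from the Kempf--Ness correspondence between closed orbits and zeros of the moment map, running the argument that underlies Theorem~\ref{mabpbcprelt} but with the smaller reductive group $\widetilde{G_{T^{\perp}}^c} = \prod_{\nu=1}^{r} SL(V_k(\chi_{\nu}))$ in place of $G^c_{T^{\perp}}$. Fix a $T$-invariant Hermitian inner product on $H^0(X,L^k)$ adapted to the weight decomposition (\ref{dech0bchoft}); it induces a Hermitian norm $||\cdot||$ on $(\mathrm{Sym}^{d_k}(V_k^*))^{\otimes(n+1)}$ invariant under the maximal compact subgroup $\prod_{\nu} SU(V_k(\chi_{\nu}))$ of $\widetilde{G_{T^{\perp}}^c}$. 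Since $\widetilde{G_{T^{\perp}}^c}$ is reductive, Kempf--Ness theory (cf.~\cite{mfk}) says that its orbit of $\Phi_{X,k}$ is closed if and only if the functional $g \mapsto ||g \cdot \Phi_{X,k}||$ attains its infimum on the orbit, and that at a minimiser the moment map for the $\prod_{\nu} SU(V_k(\chi_{\nu}))$-action vanishes.

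First I would identify this moment map with the centre of mass. By Donaldson's identification of $\bar{\mu}_X$ with the moment map for the unitary action on Chow varieties \cite{donproj1, zhang}, differentiating $\log ||e^{tB} \cdot \Phi_{X,k}||$ at $t=0$ along a Hermitian block-diagonal generator $B = \mathrm{diag}(B_1, \dots, B_r)$ produces a positive multiple of $\mathrm{tr}(B\, \bar{\mu}_X(\underline{s}))$, where $\underline{s}$ is the basis attached to $g$. As each $B_{\nu}$ is trace-free, the scalar part of every diagonal block of $\bar{\mu}_X$ drops out, so the moment map for $\prod_{\nu} SU(V_k(\chi_{\nu}))$ is exactly the collection of trace-free parts of the diagonal blocks $\bar{\mu}_X(\underline{s})|_{V_k(\chi_{\nu})}$. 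Hence, by convexity of the log-norm, a minimiser exists if and only if there is a basis $\underline{s}$ for which each diagonal block of $\bar{\mu}_X(\underline{s})$ is a scalar multiple of the identity.

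Next I would match this with the balanced condition, which requires two observations. For $T$-invariance: because $T^c$ commutes with $\widetilde{G_{T^{\perp}}^c}$ and fixes $[\Phi_{X,k}]$ (as $T^c$ preserves $\iota(X)$), the norm functional is $T$-invariant and a minimiser can be chosen $T$-invariant; for a $T$-invariant metric the cross terms between distinct weight spaces $V_k(\chi_{\nu})$, $V_k(\chi_{\mu})$ integrate to zero, so $\bar{\mu}_X(\underline{s})$ is automatically block-diagonal. Together with the moment-map condition this yields $\bar{\mu}_X(\underline{s}) = \frac{Vk^n}{N} \mathrm{diag}(b_1 \mathrm{id}_{V_k(\chi_1)}, \dots, b_r \mathrm{id}_{V_k(\chi_r)})$ with every $b_{\nu} > 0$, the positivity coming from positive-definiteness of $\bar{\mu}_X$. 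For the dictionary to balanced metrics: exactly as in the proof of Corollary~\ref{cahrelmmwcst}, writing $h = FS(H)$ with $H$-orthonormal basis $\{\sqrt{b_{\nu}} s_{\nu, i}\}$ shows this block-scalar identity is equivalent to $\sum_{\nu, i} b_{\nu} |s_{\nu, i}|^2_{h^k} = 1$ for a $Hilb(h)$-orthonormal basis $\{s_{\nu, i}\}$, i.e.~to $h$ being balanced relative to $T$. In contrast to Theorem~\ref{mabpbcprelt}, the smaller group $\widetilde{G_{T^{\perp}}^c}$ places no relation among the blocks, so the $b_{\nu}$ are arbitrary positive numbers and need not satisfy (\ref{eqofbnuchstrelt}).

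The main obstacle is the $T$-invariance step: guaranteeing that the minimiser can be taken $T$-invariant, so that $\bar{\mu}_X(\underline{s})$ is genuinely block-diagonal rather than merely having scalar diagonal blocks. I expect to settle this either by averaging an arbitrary minimiser over the compact torus $T$ and invoking convexity of $g \mapsto \log ||g \cdot \Phi_{X,k}||$ along geodesics in $\widetilde{G_{T^{\perp}}^c} / \prod_{\nu} SU(V_k(\chi_{\nu}))$, or by restricting the entire variational problem to $T$-invariant configurations from the outset, which is legitimate since $T^c$ normalises $\widetilde{G_{T^{\perp}}^c}$ and fixes $[\Phi_{X,k}]$. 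The remaining ingredients --- the Chow-weight computation and the $FS$/$Hilb$ bookkeeping --- are routine and already appear in the proof of the strong case.
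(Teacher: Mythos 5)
The paper does not actually prove Theorem \ref{mabpbwcst}: it is quoted from Mabuchi and Apostolov--Huang, and the only hint at an argument is the later remark that Theorems \ref{lzcsbalmauttriv}, \ref{mabpbcprelt} and \ref{mabpbwcst} ``can be proved by formulating (relative) Chow stability in terms of test configurations and using explicit formulae of the modified balancing energy'', with details deferred elsewhere. Your Kempf--Ness argument is therefore not the paper's (absent) route but is essentially the standard proof in the cited references, and its skeleton is sound: reductivity of $\widetilde{G_{T^{\perp}}^c}$, closedness of the orbit if and only if the norm functional attains its infimum there, identification of the critical-point equation with the condition that each diagonal block of $\bar{\mu}_X(\underline{s})$ is scalar, and the $FS$/$Hilb$ dictionary exactly as in Corollary \ref{cahrelmmwcst}. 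Two refinements are needed. First, the norm cannot be an arbitrary $\prod_{\nu} SU(V_k(\chi_{\nu}))$-invariant Hermitian norm on $(\mathrm{Sym}^{d_k}(V_k^*))^{\otimes(n+1)}$: the identity $\frac{d}{dt}\big|_{t=0}\log \| e^{tB}\cdot \Phi_{X,k}\| = c\, \mathrm{tr}\left(B\,\bar{\mu}_X(\underline{s})\right)$ holds for Zhang's Chow norm specifically, and it is Zhang's convexity theorem for that (non-Hermitian) norm that you are really invoking; a generic invariant norm would produce a different moment map, so you should fix the Chow norm at the outset. Second, the step you single out as ``the main obstacle'' is in fact automatic and needs neither averaging nor a separate convexity argument: for $t$ in the compact torus $T$, $\theta(t)$ acts on each $V_k(\chi_{\nu})$ by the unimodular scalar $\chi_{\nu}(t)$, so \emph{every} Hermitian inner product for which the weight spaces are mutually orthogonal is $T$-invariant; since the reference inner product is adapted to (\ref{dech0bchoft}) and $\widetilde{G_{T^{\perp}}^c}$ preserves the blocks, every competitor in the orbit is $T$-invariant, the cross-terms of $\bar{\mu}_X$ between distinct weight spaces vanish identically along the whole orbit (as $\chi_{\nu}\overline{\chi}_{\mu}\neq 1$ on $T$ for $\nu\neq\mu$), and the critical-point equation alone already yields the full block-scalar form required for the balanced condition.
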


\begin{corollary}
	$(X,L)$ is weakly Chow polystable at the level $k$ relative to $T$ if and only if there exists a $T$-invariant basis $\underline{s}$ such that
\begin{equation*}
	\bar{\mu}_X (\underline{s}) = \mathrm{diag} (b_1 \mathrm{id}_{V_k (\chi_1)}, \dots , b_r \mathrm{id}_{V_k (\chi_r)})
\end{equation*}
with respect to the decomposition $H^0 (X,L^k) = \bigoplus_{\nu =1}^r V_k (\chi_{\nu})$, for some $b_{\nu} >0$ (not necessarily satisfying (\ref{eqofbnuchstrelt})).
\end{corollary}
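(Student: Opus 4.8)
The plan is to mirror the proof of Corollary \ref{cahrelmmwcst} given above, simply dropping the constraint (\ref{eqofbnuchstrelt}) on the weights $b_\nu$, and to invoke the ``weak'' balanced-metric characterisation Theorem \ref{mabpbwcst} in place of Theorem \ref{mabpbcprelt}. The only computational ingredient is the evaluation of the centre of mass via Remark \ref{remdefcomitofs}, together with the observation that $T$-invariance of $h$ makes the character spaces $V_k(\chi_\nu)$ mutually orthogonal for $Hilb(h)$, so that a relatively balanced basis is genuinely block-adapted to the decomposition (\ref{dech0bchoft}).

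For the forward implication, I would suppose $(X,L)$ is weakly Chow polystable at level $k$ relative to $T$. Theorem \ref{mabpbwcst} then furnishes a $T$-invariant metric $h \in \mathcal{H}(X,L)$ balanced relative to $T$, i.e. $\sum_{\nu,i} b_\nu |s_{\nu,i}|^2_{h^k} = 1$ for some $b_\nu > 0$, where $\{s_{\nu,i}\}_i$ is a $Hilb(h)$-orthonormal basis of $V_k(\chi_\nu)$, now with no restriction on the $b_\nu$. Passing to the rescaled basis $\underline{s}' = \{\sqrt{b_\nu}\, s_{\nu,i}\}_{\nu,i}$, the balanced equation becomes exactly (\ref{defoffseq}), so $h = FS(H)$ for the $T$-invariant $H \in \mathcal{B}_k$ having $\underline{s}'$ as orthonormal basis. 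Applying Remark \ref{remdefcomitofs} and using that $\int_X h^k(s_{\nu,i}, s_{\mu,j}) \omega_h^n/n! = (V/N)\delta_{(\nu,i),(\mu,j)}$ by orthonormality, the sesquilinearity of the rescaling gives
\begin{equation*}
\bar{\mu}_X(\underline{s}') = \frac{Vk^n}{N} \mathrm{diag}\!\left(b_1 \mathrm{id}_{V_k(\chi_1)}, \dots, b_r \mathrm{id}_{V_k(\chi_r)}\right),
\end{equation*}
the off-diagonal blocks vanishing because distinct $s_{\nu,i}, s_{\mu,j}$ are $Hilb(h)$-orthogonal. Absorbing the scalar $Vk^n/N$ into the (still positive, still unconstrained) weights yields the asserted normal form.

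For the converse, a $T$-invariant basis $\underline{s}$ with $\bar{\mu}_X(\underline{s})$ of the stated block-scalar form lets me run the computation backwards: normalising so that the trace is $k^n V$ (Remark \ref{remdefcomitofs}) and reabsorbing the constant, I recover positive weights $b_\nu$ and a $T$-invariant metric $h = FS(H)$ satisfying $\sum_{\nu,i} b_\nu |s_{\nu,i}|^2_{h^k} = 1$, which is precisely a metric balanced relative to $T$, so that Theorem \ref{mabpbwcst} returns weak relative Chow polystability. I expect no serious obstacle here: the entire content is linear-algebraic and representation-theoretic, and the only points requiring care are the bookkeeping of the normalisation constant $Vk^n/N$ and the verification that $T$-invariance forces genuine block-diagonality rather than merely scalar diagonal blocks, both of which are handled exactly as in Corollary \ref{cahrelmmwcst}.
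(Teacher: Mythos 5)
Your proposal is correct and is exactly the argument the paper intends: the corollary is left without proof precisely because it is the proof of Corollary \ref{cahrelmmwcst} verbatim, with Theorem \ref{mabpbwcst} substituted for Theorem \ref{mabpbcprelt} and the constraint (\ref{eqofbnuchstrelt}) dropped, and your rescaling $\underline{s}' = \{\sqrt{b_\nu}\, s_{\nu,i}\}$, the computation $\bar{\mu}_X(\underline{s}') = \tfrac{Vk^n}{N}\mathrm{diag}(b_1\mathrm{id}_{V_k(\chi_1)},\dots,b_r\mathrm{id}_{V_k(\chi_r)})$, and the reverse rescaling $\{\sqrt{b_\nu^{-1}}\, s'_{\nu,i}\}$ reproduce that proof faithfully.
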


In particular, Chow polystability relative to $T$ implies weak Chow polystability relative to $T$.

\begin{remark} \label{rembalstab1dir}
	It is important to note that the notion of relative Chow stability comes with certain parameters associated to the automorphism, and this implies that the weight $\{ b_{\nu} \}_{\nu}$ is \textit{a priori} not uniquely determined by the assumption that $(X,L^k)$ is (weakly) relatively Chow stable. On the other hand, when we construct relative balanced metrics as in \cite{yhpreprint,mab2016,santip17,seyrel}, the weight $\left\{ b_{\nu} \right\}_{\nu}$ is of some specific value; in particular, construction of relative balanced metrics is in general stronger than proving (weak) relative Chow stability, in the sense that they provide specific values of the weight $\left\{ b_{\nu} \right\}_{\nu}$.
\end{remark}

\begin{remark}
	In fact, Theorems \ref{lzcsbalmauttriv}, \ref{mabpbcprelt}, and \ref{mabpbwcst} can be proved by formulating (relative) Chow stability in terms of test configurations and using explicit formulae of the modified balancing energy. The details of this may appear elsewhere.
\end{remark}

\subsection{Proof of Theorem \ref{wrelchowstab}}
To prove Theorem \ref{wrelchowstab}, it suffices to establish the following.

\begin{proposition} \label{rbalippbbnnla}
	If there exists $H \in \mathcal{B}_k$ such that $\omega_H$ is $K$-invariant and satisfies $\bar{\partial} \mathrm{grad}^{1,0}_{\omega_H} \rho_k (\omega_H) = 0$, then $FS(H)$ is balanced at the level $k$ relative to the centre $Z$ of $K$ for some $b_{\nu} >0$.
\end{proposition}

\begin{proof}
	Recalling Remark \ref{remdefcomitofs}, Proposition \ref{invmuholo}, and the equation (\ref{invmuholoxi}), when we write $\{ s_i \}_i$ for an $H$-orthonormal basis, we see that the basis $\left\{ s'_i \right\}_i$ defined by
\begin{equation} \label{relchchbasanui}
	s'_i := k^{-n/2} \left( cI + \xi \right)^{1/2}_{ij} s_j ,
\end{equation}
is a $\int_X h^k_{FS(H)}(,)\frac{\omega^n_{H}}{n!}$-orthonormal basis, where $\left( cI + \xi \right)_{ij} $ is the matrix for $cI + \xi$ represented with respect to $\{ s_i \}_i$, which is positive definite by Remark \ref{invmuholoxirem}. Moreover, by replacing $\{ s_i \}_i$ by an $H$-unitarily equivalent basis if necessary, we may assume that $\xi$ is diagonal. For notational convenience, we write $\{s_{\nu , i} \}_{\nu , i} $ for $\{ s_i \}_i$ (resp.~$\{s'_{\nu , i} \}_{\nu , i} $ for $\{ s'_i \}_i$) for the rest of the proof, according to the decomposition (\ref{dech0bchoft}), just to make explicit which sector $V_k(\chi_{\nu})$ each basis element $s_i$ belongs to.

Recall that Theorem \ref{mtqekm} implies $\xi \in \theta_* (\ai \mathfrak{z})$, and hence we may write
\begin{equation*}
\xi_{ij} = \mathrm{diag} (a_1 \mathrm{id}_{V_k (\chi_1)}, \dots , a_r \mathrm{id}_{V_k (\chi_r)}),
\end{equation*}
with respect to the characters $\chi_1 , \dots , \chi_r$ of $Z^c$. Thus we can write
\begin{equation*}
\left( cI + \xi \right)_{ij} = \mathrm{diag} (b_1^{-1} \mathrm{id}_{V_k (\chi_1)}, \dots , b_r^{-1} \mathrm{id}_{V_k (\chi_r)})
\end{equation*}
for some $b_{\nu}>0$, by recalling that $cI + \xi$ is positive definite (cf.~Remark \ref{invmuholoxirem}). In particular, (\ref{relchchbasanui}) can be re-written as $s'_{i , \nu} = k^{-n/2} b_{\nu}^{-1/2} s_{i , \nu} $. This means that we can write
\begin{equation} \label{polbrbaleqbnu}
	\sum_{\nu , i} b_{\nu} |s'_{\nu , i}|^2_{FS(H)^k} = k^{-n} \sum_{\nu , i}  |s_{\nu,i}|^2_{FS(H)^k} = \cst
\end{equation}
by the equation (\ref{defoffseq}), as required. Observe also that these $b_{\nu}$'s in the above equation are the eigenvalues of $(cI + \xi)^{-1}$, and \textit{not} of $cI+ \xi$, so a priori does not satisfy the equation (\ref{eqofbnuchstrelt}).
\end{proof}

\begin{remark} \label{rempbrbevalinva}
	The proof above in fact shows that $\omega_H$ satisfies $ \bar{\partial} \mathrm{grad}^{1,0}_{\omega_H} \rho_k (\omega_H) =0$ if and only if it satisfies the equation (\ref{polbrbaleqbnu}) with $b_{\nu}$'s being the eigenvalues of $(cI+\xi)^{-1}$ for some $\xi \in \theta_* (\ai \mathfrak{z})$; note the difference to the statement in Theorem \ref{mabpbcprelt}.
\end{remark}

\begin{remark}
Recalling that $Z$ is contained in any maximal torus in $K$, we finally note that Chow polystability relative to the centre $Z$ is stronger than that relative to any maximal torus in $K$.
\end{remark}
%

\section{Relative $K$-semistability from the point of view of quantisation} \label{rlksspovquant}

\subsection{Relative $K$-semistability}

We first recall the notion of test configurations that are compatible with a torus action, as defined by Sz\'ekelyhidi \cite{szeext}.

\begin{definition}
	A \textbf{test configuration} for $(X,L)$ of exponent $r$ is a $\cx^*$-equivariant flat family $\pi : \mathcal{X} \to \cx$ together with a $\cx^*$-equivariant ample line bundle $\mathcal{L}$ on $\mathcal{X}$ such that $\pi^{-1} (1) \cong (X,L^r)$.
	
	$(\XX. \LL)$ is said to be \textbf{compatible} with a complex torus $T^c \le \mathrm{Aut}_0 (X,L)$ if there exists a torus action on $(\XX, \LL)$ which
	\begin{enumerate}
		\item preserves the fibres of $\pi : \XX \to \cx$,
		\item commutes with the defining $\cx^*$-action of $(\XX, \LL)$,
		\item restricts to $T^c$ on $\pi^{-1}(t) \cong (X,L^r)$ for all $t \neq 0$.
	\end{enumerate}

	$(\XX , \LL)$ is said to be \textbf{product} if $\XX \cong X \times \cx$, and \textbf{trivial} if $\XX \cong X \times \cx$ with trivial $\cx^*$-action on $\cx$.
\end{definition}

Suppose that we have a test configuration $(\mathcal{X} , \mathcal{L})$ generated by a $\cx^*$-action $\alpha$. Then there exists an embedding $\mathcal{X} \inj \prj (H^0 (X,L^r)^*)$ by \cite[Proposition 3.7]{rt2}, with a generator $A_r \in \mathfrak{sl} (H^0 (X,L^r))$ such that $\XX$ is equal to the closure of the $\cx^*$-orbit of $X \inj \prj (H^0 (X,L^r)^*))$ generated by $A_r$.

Given $(\XX , \LL)$, we can construct a sequence of test configurations $(\XX , \LL^{\otimes k})$ for $k \in \mathbb{N}$, and as above we can write this as a closure of the $\cx^*$-orbit of $X \inj \prj(H^0 (X,L^{rk})^*)$ generated by $A_{rk} \in \mathfrak{sl} (H^0 (X,L^{rk}))$, say. Note that $A_{rk}$ is also equal to the generator of the action $\alpha : \cx^* \actson H^0 (\XX_0, \LL^{k}|_{\XX_0})$ where $\XX_0 = \pi^{-1}(0)$.

By Riemann--Roch and equivariant Riemann--Roch, we write
\begin{align}
	\mathrm{dim} H^0 (X , L^{rk}) &= a_0 (rk)^{n} + a_1 (rk)^{n-1} + \cdots, \label{rrdimhzxlkexp} \\
	\mathrm{tr} (A_{rk}) &= b_0 (rk)^{n+1} + b_1 (rk)^n + \cdots. \label{eqrrtrarkexp}
\end{align}
Observe that $a_0$ is equal to the volume $V$.

\begin{definition} \label{defchwbtestcfg}
The \textbf{Chow weight} of $(\mathcal{X} , \mathcal{L})$ is defined by
\begin{equation*}
	\mathrm{Chow}_r (\mathcal{X} , \mathcal{L}) := r b_0 -  \frac{a_0 \tr (A_r)}{\mathrm{dim} H^0 (X,L^r)}.  
\end{equation*}
\end{definition}

\begin{remark}
It is well-known that $\mathrm{Chow}_{r} (\XX , \LL) > 0$ for all nontrivial test configurations is equivalent to Chow stability of $X \inj \prj (H^0(X,L^r)^*)$ as defined in Definition \ref{defchstpsssus} (cf.~\cite[Proposition 2.11]{mumford77}), although we will not need to use this fact in what follows.
\end{remark}

In what follows, we shall assume that $L$ is very ample, and take $r=1$ for notational convenience; this can be achieved by simply replacing $L$ by a large tensor power.

\begin{definition} \label{defofdfinvalg}
	The \textbf{Donaldson--Futaki invariant} $DF(\XX , \LL)$ is defined as 
	\begin{equation*}
	DF(\XX , \LL) = (a_1 b_0 - a_0 b_1)/a_0.
	\end{equation*}
\end{definition}

\begin{remark} \label{remlimchowdf}
	Note that, by using the expansions (\ref{rrdimhzxlkexp}) and (\ref{eqrrtrarkexp}), we have
	\begin{equation} \label{remlimchowdfeq}
		 DF (\XX , \LL) = \lim_{k \to \infty} \mathrm{Chow}_k (\XX , \LL^{\otimes k}).
	 \end{equation}
\end{remark}

Let $\beta_1 , \dots , \beta_d$ be a basis for the $\cx^*$-actions generating $T^c$, with generators $B_{1,k} , \dots , B_{d,k} \in \mathfrak{sl} (H^0 (X,L^{k}))$. We define an inner product $\langle \alpha , \beta_i \rangle$ for $i = 1 , \dots , d$  as the leading coefficient of the following asymptotic expansion
\begin{equation} \label{eqrrtrabkntwo}
	\mathrm{tr} \left( A_{k}  B_{i,k} \right)  = \langle \alpha , \beta_i \rangle k^{n+2} + O(k^{n+1}), 
\end{equation}
as defined by Sz\'ekelyhidi \cite{szeext}, by recalling the well-known equivariant Riemann--Roch theorem.

Then we define the Donaldson--Futaki invariant relative to $T^c$ as follows.

\begin{definition}
	\begin{equation} \label{defofreldft}
		DF_{T^c} (\XX , \LL) = DF(\alpha) - \sum_{i=1}^d \frac{\langle \alpha , \beta_i \rangle}{\langle \beta_i , \beta_i \rangle} DF(\beta_i)
	\end{equation}
	where $DF(\alpha) = DF(\XX , \LL)$, and $DF(\beta_i)$ stands for the Donaldson--Futaki invariant for the product test configuration generated by $\beta_i$.
\end{definition}

\begin{remark}
	Writing $\bar{\alpha}$ for the projection of $\alpha$ orthogonal to $T^{c}$ with respect to $\langle , \rangle$ defined in (\ref{eqrrtrabkntwo}), we have $DF_{T^{c}} (\XX , \LL) = DF(\bar{\alpha})$.
\end{remark}

Following \cite{szeext}, we define relative $K$-semistability.
\begin{definition}
	$(X,L)$ is said to be \textbf{$K$-semistable relative to $T^{c} \le \mathrm{Aut}_0 (X,L)$} if $DF_{T^{c}} (\XX , \LL) \ge 0$ for all test configurations $(\XX , \LL)$ compatible with $T^{c}$.
\end{definition}

When we consider relative $K$-\textit{poly}stability there is a subtlety concerning the triviality of test configurations, as noted in \cite{lx}. However the result we will aim for in this paper is about relative $K$-semistability, and hence we will not be concerned with this subtlety here.

\subsection{Proof of Theorem \ref{relkss}} \label{pfthmrelkss}

We consider the case when we take $T^c$ to be the torus generated by the extremal vector field. We write $\chi$ for the $\cx^*$-action generated by the extremal vector field, with the scaling given by $DF(\chi) = \langle \chi , \chi \rangle$. Then the definition (\ref{defofreldft}) of the relative Donaldson--Futaki invariant implies $DF_{\chi} (\XX , \LL) = DF (\alpha) - \langle \alpha , \chi \rangle$.

We can write $\chi$ explicitly in terms of the scalar curvature $S(\omega)$ of the extremal metric $\omega$ as follows. Write $v_s$ for the Hamiltonian vector field defined by $S(\omega)$, where we use $\iota_{v_s} \omega  = - d S(\omega)$ for the sign convention for the Hamiltonian vector field. Recall also the faithful group representation $\theta : \mathrm{Aut}_0 (X,L) \inj SL(H^0 (X,L^{k}))$ as in Lemma \ref{lemdefofthtosl}. Write $B_{\chi , k} \in \mathfrak{sl} (H^0 (X, L^{k}))$ for the generator of $\chi$ which we may assume is hermitian (cf.~\cite{donlb}). Thus the generator of the $\cx^*$-action on $H^0 (X,L^{k})$ defined by the product test configuration generated by $v_s$ is a constant multiple of $\theta_*(Jv_s) = \ai \theta_* (v_s)$, where $J$ is the complex structure of $X$. Note that we have
\begin{equation} \label{eqjvsclftssq}
	DF \left(  \frac{\theta_* (J v_s)}{2 \pi } \right) =  \frac{1}{4 \pi} \int_X (S(\omega) - \bar{S})^2 \frac{\omega^n}{n!},
\end{equation}
which will be proved in \S \ref{expfflrrsn} (cf.~Corollary \ref{calgdfancfssc}), together with an explicit density formula for the equivariant Riemann--Roch theorem (Theorem \ref{exeploceqrrt}). Thus we look for a constant $C$ such that $B_{\chi , k} =   \frac{C}{2 \pi} \theta_* (J v_s)$. $C$ can be determined by the scaling $DF(\chi) = \langle \chi , \chi \rangle$. Now (\ref{eqjvsclftssq}) implies
\begin{equation*}
	DF(\chi) = DF \left(  C \frac{\theta_* (J v_s)}{2 \pi } \right) =  \frac{C}{4 \pi} \int_X (S(\omega) - \bar{S})^2 \frac{\omega^n}{n!},
\end{equation*}
and on the other hand we have
\begin{align*}
	\langle \chi , \chi \rangle &= \lim_{k \to \infty} \frac{C^2}{k^{n+2}}  \mathrm{tr} \left( \left(  \frac{\theta_* (J v_s)}{2 \pi } \right)^2 \right)   \\
	&= C^2 \int_X (S(\omega) - \bar{S})^2 \frac{\omega^n}{n!} ,
\end{align*}
by equivariant Riemann--Roch \cite[Proposition 7.16]{sze}. Thus $DF(\chi) = \langle \chi , \chi \rangle$ implies $C =  1/4 \pi $, and hence we get
\begin{equation*}
	B_{\chi, k} =  \frac{\theta_* (J v_s)}{8 \pi^2 }.
\end{equation*}

Suppose that we have a $\cx^*$-action $\beta$ generating a test configuration $(\XX_{\beta} , \LL_{\beta})$, and let $B \in \mathfrak{sl} (H^0 (X,L))$ be its generator. Writing $B_{k} \in \mathfrak{sl} (H^0 (X , L^{k}))$ for the generator of the $\cx^*$-action $\beta : \cx \actson H^0 (X, L^k)$, we have 
\begin{align}
	\langle \beta , \chi \rangle &= \lim_{k \to \infty} k^{-n-2} \mathrm{tr} \left( B_{k} \left( \frac{\theta_* (J v_s)}{8 \pi^2} \right) \right) \notag \\
	&= \frac{1}{ 8\pi^2} \lim_{k \to \infty} k^{-n-2} \mathrm{tr} \left( B_{k} \theta_* (Jv_s) \right). \label{ipbtchlimmatrix}
\end{align}

Since the modified balancing energy $\mathcal{Z}^A$ (cf.~Definition \ref{defmodbalenza} and (\ref{fmlindzmmbct})) is a geodesically convex function which admits a critical point, we have
\begin{align*}
	&\lim_{t \to \infty} \frac{d}{dt} \mathcal{Z}^A (H_t) \\
	&= \lim_{t \to \infty} \mathrm{tr} (B_k \bar{\mu}_X (H_t)) - \frac{k^n V}{N} \mathrm{tr}\left( B_k \left( I + C_A I + \frac{A}{2 \pi k} \right)^{-1} \right) > 0 ,
\end{align*}
for all geodesics $\left\{ H_t = e^{-B_k t} H_0 \right\} \subset \mathcal{B}_{k}$, where $B_k  \in \mathfrak{sl} (H^0(X,L^{k}))$ is hermitian and commutes with $\chi$. By recalling Definition \ref{defchwbtestcfg} and noting that $B_k$ is trace-free, we get
\begin{equation*}
	 \lim_{t \to \infty} \mathrm{tr} (B_k  \bar{\mu}_X (H_t)) = k^n \mathrm{Chow}_{k} (\XX_{\beta}, \LL^{\otimes k}_{\beta})
\end{equation*}
from \cite[Proposition 3]{donlb}. Recalling also (\ref{remlimchowdfeq}), we get
\begin{align}
	&\lim_{k \to \infty} \lim_{t \to \infty} k^{-n} \frac{d}{dt} \mathcal{Z}^A (H_t) \notag \\
	&= DF(\XX_{\beta}, \LL_{\beta}) - \lim_{k \to \infty} \frac{V}{N} \mathrm{tr} \left( B_k \left( I + C_A I + \frac{A}{ 2 \pi k} \right)^{-1} \right)  \ge 0. \label{rdflmznn}
\end{align}

We evaluate the second term of the above inequality, and show that it is equal to the correction term $\langle \beta, \chi \rangle$ in the relative Donaldson Futaki invariant $DT_{\chi} (\XX_{\beta} , \LL_{\beta})$.

Now the well-known expansion of the Bergman function \cite{bouche,lu,ruan,tian90,yauberg,zelditch} and $\omega_k \to \omega$ (as $k \to \infty$, cf.~Theorem \ref{mtqekm}) implies
\begin{align}
	\frac{V}{N} \mathrm{grad}_{\omega_k} \rho_k (\omega_k) &= \mathrm{grad}_{\omega} \left( 1 + \frac{1}{4 \pi k} (S(\omega) - \bar{S}) + O(k^{-2}) \right) \notag \\
	&=\frac{1}{4 \pi k} \mathrm{grad}_{\omega} S(\omega) + O(k^{-2}) \notag \\
	&= - \frac{1 }{4 \pi k } Jv_s + O(k^{-2}). \notag
\end{align}

Hence we have
\begin{equation*}
	\frac{A}{2 \pi k} = - \frac{\theta_*(Jv_s)}{8 \pi^2 k^2} + \text{higher order terms in } k^{-1}.
\end{equation*}

By further noting that $|| A ||_{op}$ is bounded uniformly of $k$ and $C_A = O(k^{-1})$ (cf.~Theorem \ref{mtqekm}), and also recalling $\mathrm{tr} (B_k) =0$, we get
\begin{align*}
	&\lim_{k \to \infty} \frac{V}{N} \mathrm{tr} \left( B_k \left( I + C_A I + \frac{A}{ 2 \pi k} \right)^{-1} \right) \\
	&= \lim_{k \to \infty} \frac{V}{N}\mathrm{tr} \left( B_k \left( I - C_A I - \frac{A}{2 \pi k} +  \text{higher order terms in } k^{-1}  \right) \right) \\
	&= \lim_{k \to \infty} \frac{V}{N}\mathrm{tr} \left( B_k \left( I - C_A I - \frac{A}{2 \pi k}  \right) \right) \\
	&= \lim_{k \to \infty} \frac{V}{N}\mathrm{tr} \left( B_k \frac{1}{8 \pi^2 k^2} \theta_*(J v_s) \right) \\
	&= \lim_{k \to \infty} \frac{1 }{k^{n+2}} \frac{1 }{8 \pi^2} \mathrm{tr} \left( B_k \theta_* (Jv_s) \right) \\
	&=\langle \beta , \chi \rangle .
\end{align*}

Thus, (\ref{rdflmznn}) can be written as
\begin{equation*}
	\lim_{k \to \infty} \lim_{t \to \infty} k^{-n} \frac{d}{dt} \mathcal{Z}^A (H_t)  = DF(\XX_{\beta}, \LL_{\beta}) - \langle \beta , \chi \rangle  \ge 0 .
\end{equation*}
Since this inequality holds for any $\cx^*$-action $\beta$ that commutes with $\chi$, we finally get
\begin{equation*}
	DF_{\chi} (\XX_{\beta} , \LL_{\beta})= DF(\XX_{\beta}, \LL_{\beta}) - \langle \beta , \chi \rangle  \ge 0 ,
\end{equation*}
for any test configuration $(\XX_{\beta} , \LL_{\beta})$, as required. This completes the proof of Theorem \ref{relkss}.

\section{Explicit formula for the local equivariant Riemann--Roch theorem} \label{expfflrrsn}
When the test configuration is product, it is well-known that the equivariant Riemann--Roch theorem admits a differential-geometric formula, such that the coefficients in the expansion (\ref{eqrrtrarkexp}) can be computed from curvature quantities. We shall prove in this section an explicit formula for the local density function for this, which reduces the expansion (\ref{eqrrtrarkexp}) to the one of the Bergman function.

Let $v \in \mathrm{Lie}(K)$ be a real holomorphic Hamiltonian vector field, satisfying 
\begin{equation*}
	\iota(v) \omega_h = - d \psi
\end{equation*}
where $\omega_h$ is a $K$-invariant \kah metric. Writing $\bar{\rho}_k (\omega_h) := \frac{V}{N} \rho_k (\omega_h)$ for the re-scaled Bergman function of $\omega_h$, we state the main result of this section as follows.
\begin{theorem}
	\label{exeploceqrrt}
Writing $A := \theta_* (J v)$ by using $\theta$ in Lemma \ref{lemdefofthtosl}, we have
\begin{equation*}
	\frac{V}{k N} \mathrm{tr} \left( \frac{\theta_* (Jv)}{2 \pi} \right) = - \int_X \psi  \bar{\rho}_k (\omega_h) \frac{\omega^n_h}{n!} - \int_X  \frac{1}{ 4 \pi k} \left( d \psi , d  \bar{\rho}_k (\omega_h) \right)_{\omega_h} \frac{\omega^n_h}{n!}.
\end{equation*}
\end{theorem}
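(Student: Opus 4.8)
The plan is to compute $\tr(\theta_*(Jv))$ directly by expressing it as the trace of an explicit first-order operator on $H^0(X,L^k)$ and integrating against the Bergman kernel on the diagonal. First I would fix an $L^2(h^k,\omega_h)$-orthonormal basis $\{s_i\}_i$ of $H^0(X,L^k)$ and write $\tr(\theta_*(Jv)) = \ai\,\tr(\theta_*(v)) = \ai\sum_i\int_X\langle\theta_*(v)s_i,s_i\rangle_{h^k}\frac{\omega_h^n}{n!}$, using $\theta_*(Jv)=\ai\theta_*(v)$. The key input is Kostant's formula for the infinitesimal action on holomorphic sections of the lift of $v$ to $L^k$, namely $\theta_*(v)s = \nabla_v s + 2\pi\ai k\psi s$, where $\nabla$ is the Chern connection of $h^k$; the zeroth-order term is governed by the Hamiltonian (which becomes $k\psi$ for $L^k$) and the precise constant is fixed by the normalisation $c_1(L)=[\omega_h]$.

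With this formula the trace splits into two pieces. The zeroth-order piece is immediate: $2\pi\ai k\sum_i\int_X\psi|s_i|^2_{h^k}\frac{\omega_h^n}{n!} = 2\pi\ai k\int_X\psi\rho_k(\omega_h)\frac{\omega_h^n}{n!}$ since $\sum_i|s_i|^2_{h^k}=\rho_k(\omega_h)$, and this will produce the first term on the right-hand side after the overall rescaling. The first-order piece $\sum_i\int_X\langle\nabla_v s_i,s_i\rangle_{h^k}\frac{\omega_h^n}{n!}$ is the heart of the matter. Here I would use holomorphicity, $\nabla^{0,1}s_i=0$, to replace $\nabla_v s_i$ by $\nabla_{v^{1,0}}s_i$; writing $S:=\sum_i\langle\nabla_{v^{1,0}}s_i,s_i\rangle_{h^k}$ for the pointwise quantity, compatibility of $\nabla$ with the metric yields $S+\bar S = v(\rho_k)$ and $\ai(S-\bar S)=(Jv)(\rho_k)$, so that the real and imaginary parts of $S$ are $\tfrac12 v(\rho_k)$ and $-\tfrac12(Jv)(\rho_k)$ respectively.

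I would then observe that the real part integrates to zero: since $\omega_h$ is $K$-invariant and $v$ preserves $\omega_h$, we have $v(\rho_k)\frac{\omega_h^n}{n!}=\mathcal{L}_v\bigl(\rho_k\frac{\omega_h^n}{n!}\bigr)=d\,\iota_v\bigl(\rho_k\frac{\omega_h^n}{n!}\bigr)$, which is exact and has vanishing integral over the closed manifold $X$. Hence $\int_X S\,\frac{\omega_h^n}{n!}=-\tfrac{\ai}{2}\int_X(Jv)(\rho_k)\frac{\omega_h^n}{n!}$, and the Kähler identity $Jv=-\mathrm{grad}_{\omega_h}\psi$ (equivalent to $\iota_v\omega_h=-d\psi$) turns this into $\tfrac{\ai}{2}\int_X(d\psi,d\rho_k)_{\omega_h}\frac{\omega_h^n}{n!}$. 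Assembling the two pieces, multiplying through by $\tfrac{V}{2\pi kN}$, and substituting $\bar\rho_k(\omega_h)=\tfrac{V}{N}\rho_k(\omega_h)$ then gives the stated identity.

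The main obstacle is the first-order piece: one must correctly identify $\sum_i\langle\nabla_v s_i,s_i\rangle$ with a derivative of the Bergman function, which is precisely where holomorphicity and Chern-connection compatibility enter, and one must keep careful track of real versus imaginary parts so that the symmetric part drops out by Stokes while the antisymmetric part survives as the pairing $(d\psi,d\rho_k)_{\omega_h}$. A secondary but essential point is pinning down the exact constant and sign in Kostant's formula under this paper's curvature normalisation, since the factors of $2\pi$ and $k$ appearing there are exactly what deliver the stated coefficients.
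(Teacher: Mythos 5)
Your proof is correct, but it takes a genuinely different route from the paper's. The paper works through the Fubini--Study geometry of the Kodaira embedding: it invokes the formula $\psi' = -\frac{1}{2\pi k}\sum_{i,j}A_{ij}h^k_{FS(H')}(s'_i,s'_j)$ for the Hamiltonian of the induced vector field on $\prj^{N-1}$ restricted to $X$ (equation (\ref{expfmofpsip}), quoted from the author's earlier work), combines it with Rawnsley's identity $h^k_{FS(H')}=\bar{\rho}_k(\omega_h)^{-1}h^k$ and the change of Hamiltonian under the potential shift $\frac{1}{2\pi k}\log\bar{\rho}_k(\omega_h)$, and then integrates the resulting pointwise identity (\ref{putthmstmtabveq}) against a $Hilb(h)$-orthonormal basis. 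You instead compute $\tr(\theta_*(Jv))$ intrinsically as the trace of the Kostant--Souriau operator $\nabla_v+2\pi\ai k\psi$ in an $L^2$-orthonormal basis, splitting off the zeroth-order term (which gives $-\int_X\psi\bar{\rho}_k$ after rescaling) and handling the first-order term via metric compatibility of the Chern connection, holomorphicity of the $s_i$, and Stokes (using $K$-invariance of $\omega_h$ to kill the symmetric part $\int_X v(\rho_k)\,\omega_h^n/n!$); the surviving antisymmetric part is exactly $\frac{1}{2}(d\psi,d\rho_k)_{\omega_h}$, and the constants assemble to the stated identity. The two approaches rest on the same essential input in different guises: your Kostant formula is the intrinsic counterpart of the paper's (\ref{expfmofpsip}), which is the $\mathfrak{u}(N)$ moment-map formula on $\prj^{N-1}$ restricted to $X$, and in both treatments the additive normalisation of $\psi$ (equivalently, the trace ambiguity of the lift, cf.~Remark \ref{remaddcsthampsi}) must be pinned down consistently --- you correctly flag this as the delicate point. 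What your route buys is a more self-contained argument that makes the ``local density'' interpretation transparent, since your integrand is literally the diagonal of the equivariant Bergman kernel; what the paper's route buys is the explicit identification of that density with Sz\'ekelyhidi's $S^1$-equivariant Bergman kernel $B^{S^1}_{h^k}$ in the form (\ref{sobkelasyexp}), which is used for the surrounding remarks. One point to make explicit if you write this up: you should justify that $\nabla_v+2\pi\ai k\psi$ preserves $H^0(X,L^k)$ and coincides with the derivative $\theta_*(v)$ of the lifted group action of Lemma \ref{lemdefofthtosl}; this uses that $v$ is real holomorphic with holomorphy potential and is where the specific choice of lift enters.
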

By using the asymptotic expansion for the Bergman function and recalling Definition \ref{defofdfinvalg}, we obtain the following result.
\begin{corollary} \label{calgdfancfssc}
The Donaldson--Futaki invariant for the product test configuration generated by $\theta_* (Jv) /2 \pi$ admits a differential-geometric formula as follows.
\begin{equation} \label{algdfancfssc}
	DF \left( \frac{\theta_* (Jv)}{2 \pi} \right) = \frac{1}{4\pi}  \int_X \psi (S(\omega_h )- \bar{S}) \frac{\omega^n_h}{n!} .
\end{equation}
\end{corollary}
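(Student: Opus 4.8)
The plan is to combine the density formula of Theorem \ref{exeploceqrrt} with the Tian--Yau--Zelditch expansion of the Bergman function and to read off the coefficient of $k^{-1}$ on both sides. The first step is to translate the Donaldson--Futaki invariant of Definition \ref{defofdfinvalg} into a statement about the asymptotics of the left-hand side of Theorem \ref{exeploceqrrt}. Writing $A_k := \theta_*(Jv)/2\pi$ for the generator on $H^0(X,L^k)$ and using the Riemann--Roch expansions $N = \dim H^0(X,L^k) = Vk^n + a_1 k^{n-1} + \cdots$ and $\mathrm{tr}(A_k) = b_0 k^{n+1} + b_1 k^n + \cdots$ (recall $a_0 = V$), a direct expansion gives
\begin{equation*}
	\frac{V}{kN}\mathrm{tr}(A_k) = b_0 + \left( b_1 - \frac{a_1 b_0}{V} \right) k^{-1} + O(k^{-2}),
\end{equation*}
whose $k^{-1}$ coefficient is precisely $-(a_1 b_0 - a_0 b_1)/a_0 = - DF\!\left( \theta_*(Jv)/2\pi \right)$. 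Thus it suffices to extract the $k^{-1}$ coefficient of the right-hand side of Theorem \ref{exeploceqrrt}.

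Next I would substitute the Bergman expansion $\bar{\rho}_k(\omega_h) = 1 + \frac{1}{4\pi k}(S(\omega_h) - \bar{S}) + O(k^{-2})$, in the form already used in \S\ref{rlksspovquant}, into the two integrals. In the first integral the constant term $1$ produces the $k^0$-contribution $-\int_X \psi \, \omega_h^n/n!$ (which matches $b_0$), while the term $\frac{1}{4\pi k}(S(\omega_h) - \bar{S})$ produces the $k^{-1}$-contribution $-\frac{1}{4\pi}\int_X \psi (S(\omega_h) - \bar{S}) \, \omega_h^n/n!$. The key observation for the second integral is that $d\bar{\rho}_k(\omega_h) = \frac{1}{4\pi k}\, dS(\omega_h) + O(k^{-2})$, so that $\frac{1}{4\pi k}(d\psi, d\bar{\rho}_k(\omega_h))_{\omega_h}$ is already $O(k^{-2})$ and contributes nothing at order $k^{-1}$. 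Matching the $k^{-1}$ coefficients of the two sides then yields $-DF\!\left( \theta_*(Jv)/2\pi \right) = -\frac{1}{4\pi}\int_X \psi (S(\omega_h) - \bar{S}) \, \omega_h^n/n!$, which is exactly (\ref{algdfancfssc}).

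The computation is essentially bookkeeping, and the main point requiring care is the interplay of orders in $k$: one must expand $V/(kN)$ to subleading order, since the $a_1$-correction to $N$ feeds into the $k^{-1}$ coefficient and is precisely what reconstitutes the full numerator $a_1 b_0 - a_0 b_1$ of the Donaldson--Futaki invariant. A secondary point is to confirm that the gradient (second) integral in Theorem \ref{exeploceqrrt}, although nominally of order $k^{-1}$, is genuinely of order $k^{-2}$ once the Bergman expansion is inserted; this relies on the leading term of $\bar{\rho}_k(\omega_h)$ being the constant $1$, whose differential vanishes, together with the fact that the Bergman expansion holds in $C^\infty$ so that its remainder may be differentiated without loss of order. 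No input beyond Theorem \ref{exeploceqrrt}, the Bergman expansion, and Definition \ref{defofdfinvalg} is needed.
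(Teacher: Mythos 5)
Your proposal is correct and follows exactly the route the paper intends: the paper derives the corollary from Theorem \ref{exeploceqrrt} by inserting the Bergman expansion and recalling Definition \ref{defofdfinvalg}, which is precisely your computation. Your bookkeeping of the $a_1$-correction to $N$ reconstituting the numerator $a_1 b_0 - a_0 b_1$, and the observation that the gradient integral is $O(k^{-2})$ because the leading term of $\bar{\rho}_k(\omega_h)$ is constant, are the two points that make the one-line derivation in the paper rigorous.
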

The connection between the algebraically defined Donaldson--Futaki invariant and the analytically defined Futaki invariant (on the right hand side) is a well-known theorem in \kah geometry \cite{dontoric}, but the the above formula explicitly specifies the generator of the test configuration, including the sign and the scaling, in terms of the vector field $v$. By choosing $v$ to be the extremal vector field $v_s$, we get the formula (\ref{eqjvsclftssq}).

\begin{remark}
	Sz\'ekelyhidi \cite[\S 7.3]{sze} introduced the \textbf{$S^1$-equivariant Bergman kernel} $B^{S^1}_{h^k}$ as a local density function for  $\mathrm{tr} \left( \theta_* (v) / 2 \pi \ai \right)$ (cf.~(\ref{defsoeqbksz})). The proof of Theorem \ref{exeploceqrrt} is based on the following explicit formula for $B^{S^1}_{h^k}$ as
	\begin{equation} \label{sobkelasyexp}
		B^{S^1}_{h^k} = k \left( \psi \bar{\rho} (\omega_h)  + \frac{1}{ 4 \pi k} \left( d \psi , d  \bar{\rho}_k (\omega_h) \right)_{\omega_h} \right).
	\end{equation}
	This formula enables us to obtain the full asymptotic expansion of $B^{S^1}_{h^k}$ in terms of the one of $\bar{\rho}_k (\omega_h)$, complementing the result given in \cite[Proposition 7.12]{sze}, which identifies the first two coefficients of the asymptotic expansion.
\end{remark}

\begin{proof}

Suppose that we take $H' := Hilb(h)$, which is $K$-invariant if $\omega_h$ is, by \cite[Lemma 2.25]{yhpreprint}. Then the Hamiltonian $\psi'$ with respect to $\omega_{FS(H')}$ can be written as
\begin{equation} \label{expfmofpsip}
	\psi' = -\frac{1}{2 \pi k} \sum_{i,j} A_{ij} h^k_{FS(H')} (s'_i , s'_j)
\end{equation}
where $\left\{ s'_i \right\}_i$ is a $H'$-orthonormal basis, by \cite[Lemma 4.3]{yhpreprint}. Writing $\bar{\rho}_k (\omega_h) := \frac{V}{N} \rho_k (\omega_h)$ for the re-scaled Bergman function, we have the well-known formula of Rawnsley \cite{rawnsley}
\begin{equation} \label{rawnsleyeq}
h^k_{FS(H')} = \bar{\rho} (\omega_h)^{-1} h^k ,
\end{equation}
which implies
\begin{align*}
	\iota(v) \omega_{H'} &= \iota(v) \left( \omega_h + \frac{\ai}{2 \pi k} \ddbar \log \bar{\rho}_k (\omega_h) \right) \\
	&= - d \left( \psi + \frac{1}{4 \pi k} \left( d \psi,  d \log \bar{\rho}_k (\omega_h) \right)_{\omega_h} \right)
\end{align*}
where $\left( , \right)_{\omega_h}$ is the pointwise inner product on 1-forms defined by $\omega_h$, by recalling \cite[Lemma 3.3]{yhpreprint}. In particular, this implies
\begin{equation} \label{psippsidetaddc}
	\psi' = \psi + \frac{1}{4 \pi k} \left( d \psi,  \frac{d  \bar{\rho}_k (\omega_h)}{ \bar{\rho}_k (\omega_h)} \right)_{\omega_h}
\end{equation}
up to an additive constant (cf.~Remark \ref{remaddcsthampsi}). Recalling (\ref{expfmofpsip}) and (\ref{rawnsleyeq}) we have
\begin{equation} \label{putthmstmtabveq}
	-\frac{1}{2 \pi k} \sum_{i,j} A_{ij} h^k (s'_i , s'_j) = \psi  \bar{\rho}_k (\omega_h) + \frac{1}{ 4 \pi k} \left( d \psi , d  \bar{\rho}_k (\omega_h) \right)_{\omega_h}.
\end{equation}
Now our scaling and sign convention implies that the $S^1$-equivariant Bergman kernel $B^{S^1}_{h^k}$ of Sz\'ekelyhidi can be written as
\begin{equation} \label{defsoeqbksz}
	B^{S^1}_{h^k} = -  \frac{1}{2 \pi} \sum_{i,j} A_{ij} h^k (s'_i , s'_j), 
\end{equation}
whereby establishing (\ref{sobkelasyexp}). Integrating both sides of the equation (\ref{putthmstmtabveq}), we thus get
\begin{equation*}
	\frac{V}{k N} \mathrm{tr} \left( \frac{\theta_* (Jv)}{2 \pi} \right) = - \int_X \psi  \bar{\rho}_k (\omega_h) \frac{\omega^n_h}{n!} - \int_X  \frac{1}{ 4 \pi k} \left( d \psi , d  \bar{\rho}_k (\omega_h) \right)_{\omega_h} \frac{\omega^n_h}{n!},
\end{equation*}
as claimed.

\end{proof}

\begin{remark} \label{remaddcsthampsi}
	Hamiltonian functions are well-defined only up to a constant, but the Hamiltonian $\psi$ in the statement of Theorem \ref{exeploceqrrt} is the one that is uniquely determined by (\ref{expfmofpsip}) and (\ref{psippsidetaddc}).
	
	On the other hand, recall that the ambiguity in Hamiltonian has precisely to do with the linearisation of the Hamiltonian vector $v$; changing $\psi \mapsto \psi + c$, $c \in \rl$, is exactly the same as changing
	\begin{equation*}
\frac{\theta_* (Jv)}{2 \pi} \mapsto \frac{\theta_* (Jv)}{2 \pi} - ckI
	\end{equation*}
	which leaves the test configuration unchanged.
\end{remark}

\section{Related results} \label{conjrelttprevkres}

Recent development in the field \cite{yhpreprint,mab2016,santip17,seyrel} has provided several notions of ``quantised'' or ``relatively balanced'' metrics adapted to the extremal metrics. There are subtle, yet nontrivial differences among them; the reader is referred to \cite[\S 6]{yhpreprint} for the review.

In this paper we shall be concerned with stability results that they imply. An important result in this direction is the following.

\begin{theorem} \emph{(Mabuchi \cite{mab2016}, Seyyedali \cite{seyrel})} \label{conjwracoexmfd}
The existence of extremal metrics in $c_1 (L)$ implies asymptotic Chow polystability of $(X,L)$ relative to any maximal torus in $K$.
\end{theorem}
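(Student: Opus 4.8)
The plan is to set aside the quantised extremal equation (\ref{dbgradrom0}) and argue instead through a direct construction of relative balanced metrics with the \emph{affine} weights of (\ref{eqofbnuchstrelt}). This is the crucial point: as the proof of Proposition \ref{rbalippbbnnla} and Remark \ref{rempbrbevalinva} show, a solution of (\ref{dbgradrom0}) produces weights $b_\nu$ that are eigenvalues of $(cI+\xi)^{-1}$, which are \emph{not} of the affine form (\ref{eqofbnuchstrelt}), so it yields only weak relative Chow polystability. To obtain genuine relative Chow polystability one must produce metrics whose weights are the eigenvalues of $cI+\xi$ itself. Since, by the remark closing \S\ref{stextmfdxl}, Chow polystability relative to the centre $Z$ is stronger than that relative to any maximal torus containing $Z$, it suffices to prove asymptotic Chow polystability relative to $Z$. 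By Corollary \ref{cahrelmmwcst} (applied with the torus taken to be $Z$), this reduces to producing, for all $k \gg 0$, a $K$-invariant $H_k \in \mathcal{B}_k$ together with $\xi_k \in \theta_*(\ai\mathfrak{z})$ such that
\begin{equation*}
	\bar{\mu}_X (H_k) = \frac{Vk^n}{N} I + \xi_k ,
\end{equation*}
solving for the pair $(H_k,\xi_k)$ simultaneously.

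First I would produce an approximate solution. Taking $H_k := Hilb(\omega)$ for the extremal metric $\omega$ (which is $K$-invariant, so $H_k$ is $K$-invariant by \cite[Lemma 2.25]{yhpreprint}), the Tian--Yau--Zelditch expansion of the Bergman function \cite{bouche,lu,ruan,tian90,yauberg,zelditch}, combined with Calabi's theorem \cite{cal2} that the extremal field $v_s$ is central (so that $\theta_*(Jv_s) \in \theta_*(\ai\mathfrak{z})$), shows that $\bar{\mu}_X(Hilb(\omega))$ equals $\frac{Vk^n}{N}I$ plus a term of order $k^{-1}$ that is itself a multiple of $\theta_*(Jv_s)$ and hence lies in $\theta_*(\ai\mathfrak{z})$. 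Choosing $\xi_k$ to absorb this term, $Hilb(\omega)$ solves the balancing equation up to an error of order $k^{-2}$ lying in the directions transverse to $\theta_*(\ai\mathfrak{z})$.

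I would then upgrade this to an exact solution by a Newton-type / implicit function theorem iteration over the group $\widetilde{G_{T^{\perp}}^c}$, carried out among $K$-invariant metrics. After the usual rescaling, the linearisation of the balancing map $H \mapsto \bar{\mu}_X(H)$ is governed by the Lichnerowicz operator $\mathcal{D}^*\mathcal{D}$ through Bergman-kernel asymptotics, and on $K$-invariant variations its kernel consists of the $K$-invariant holomorphic vector fields, which form the complexified Lie algebra $\theta_*(\mathrm{Lie}(Z^c))$ of the centre. These are precisely the directions that the weight $\xi_k$ is free to move in, so they are absorbed rather than obstructing; one inverts the operator on the orthogonal complement and iterates, the $O(k^{-2})$ smallness of the initial error against the operator-norm bounds closing the iteration for all large $k$. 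Chow polystability relative to $Z$ at every large level then gives the asymptotic statement relative to $Z$, hence relative to every maximal torus containing $Z$.

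The main obstacle is the uniform-in-$k$ invertibility of the linearised balancing operator on the $K$-invariant complement of $\theta_*(\mathrm{Lie}(Z^c))$, with a lower bound on its smallest eigenvalue that degenerates slowly enough to dominate the $k^{-2}$ error uniformly; this is where the interplay between the analytic estimates for $\mathcal{D}^*\mathcal{D}$ and the finite-dimensional torus bookkeeping must be controlled, and it is exactly what is carried out in \cite{mab2016,seyrel}. A secondary point requiring care is that $\xi_k$ be determined consistently at each step of the iteration, so that the weights stay of the affine form (\ref{eqofbnuchstrelt}) throughout rather than drifting into the non-affine regime that (\ref{dbgradrom0}) would produce.
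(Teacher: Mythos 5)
The paper does not actually prove Theorem \ref{conjwracoexmfd}: it is stated as a quoted result of Mabuchi and Seyyedali, and the only argumentative content the paper attaches to it is the observation, via Corollary \ref{cahrelmmwcst}, that the statement is equivalent to producing for all $k \gg 0$ a basis $\underline{s}_k$ with $\bar{\mu}_X(\underline{s}_k) \in \theta_*(\mathfrak{aut}(X,L))$. So there is no in-paper proof to compare yours against. Your outline is consistent with that reduction and with the strategy actually used in \cite{mab2016,seyrel}; in particular you correctly isolate the essential point that a solution of (\ref{dbgradrom0}) gives weights that are eigenvalues of $(cI+\xi)^{-1}$ rather than of the affine form (\ref{eqofbnuchstrelt}), so that the quantised extremal equation cannot be recycled here and a separate construction with affine weights is required. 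The reduction to the centre $Z$ via the remark closing \S\ref{stextmfdxl}, and the identification of the kernel of the linearised operator on $K$-invariant variations with $\theta_*(\mathrm{Lie}(Z^c))$, are also correct.

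That said, what you have written is a proof plan, not a proof. The two steps that carry all the weight are (i) the uniform-in-$k$ lower bound for the linearised balancing map on the $K$-invariant complement of the central directions, with a rate of degeneration slow enough to beat the $O(k^{-2})$ error, and (ii) the consistent re-determination of $\xi_k$ at each stage of the iteration so that the error genuinely stays transverse to $\theta_*(\ai\mathfrak{z})$ at every order (the first-order term is central because the extremal field is, by Calabi, but nothing forces the higher-order corrections to decompose so cleanly without re-projecting). You name both issues and then defer them to the very references the theorem is attributed to, which is circular as a proof but is a fair description of where the real work lies. If the goal were to reprove the theorem within this paper's framework, these two estimates would have to be supplied; as a reconstruction of the cited argument, your sketch is accurate in outline.
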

Mabuchi \cite{mab2016} in fact proved a stronger result of $(X,L)$ being asymptotically Chow polystable relative to the centre $Z$ of $K$.

Recalling Corollary \ref{cahrelmmwcst}, this amounts to showing, for all large enough $k$, the existence of basis $\underline{s}_k$ for $H^0(X,L^k)$ such that $\bar{\mu}_X (\underline{s}_k) \in \theta_* (\mathfrak{aut} (X,L))$, which we shall also abbreviate as $\bar{\mu}_X \in \mathfrak{aut}(X,L)$.

The notion of \textbf{$\sigma$-balanced metric} was introduced by Sano in \cite{sanotalk}, where a \kah metric $\omega_h$ is said to be $\sigma$-balanced if there exists $\sigma \in \mathrm{Aut}_0 (X,L)$ such that $\omega_{FS(Hilb(h))} = \sigma^* \omega_h $. Sano--Tipler \cite{santip17} further proved the existence of $\sigma$-balanced metrics for all large enough $k$, assuming the existence of extremal metrics on $(X,L)$. 

Thus we have three notions of ``quantised'' or ``relatively balanced'' metrics in the literature, each of which exists for all large enough $k$ when $(X,L)$ admits an extremal metric:
\begin{enumerate}
	\item $\bar{\partial} \mathrm{grad}^{1,0}_{\omega} \rho_k (\omega) = 0$,
	\item $\bar{\mu}_X \in \mathfrak{aut}(X,L)$,
	\item $\omega_{FS(Hilb(h))} = \sigma^* \omega_h $.
\end{enumerate}
As discussed in \cite[\S 6]{yhpreprint}, equivalence of these notions is a subtle open problem. In fact, an argument that is almost identical to the proof of Proposition \ref{rbalippbbnnla} shows that the existence of $\sigma$-balanced metrics implies that $(X,L)$ is Chow polystable relative to a torus in $K$. Thus, the relationship of the above notions and stability properties can be summarised as follows.



\begin{displaymath}
	\xymatrix{
	\bar{\partial} \mathrm{grad}^{1,0}_{\omega_k} \rho_k (\omega_k) = 0 \ar@{=>}[ddr]_{\text{Thm \ref{wrelchowstab}}}   & \bar{\mu}_X \in  \mathfrak{aut}(X,L) \ar@{<=>}[d]^{\text{Cor \ref{cahrelmmwcst}}} \ar@{<.>}[l]_{?} \ar@{<.>}[r]^{?} & \omega_{FS(Hilb(h))} = \sigma^* \omega_h \ar@{=>}[ddl]^{\text{cf.~Prop \ref{rbalippbbnnla}}}  \\
	&  \text{Rel Chow ps}  \ar@{=>}[d] \\
	& \text{Weak Rel Chow ps} &
	}
\end{displaymath}

When we assume that $(X,L)$ admits an extremal metric, we have three theorems establishing the existence of ``quantised extremal'' or ``relatively balanced'' metrics, as presented below (where ``A-'' stands for ``Asymptotic'').

\begin{displaymath}
	\xymatrix{
	& (X,L) \text{ has an extemal metric.} \ar@{=>}[dl]_{\forall k \gg 1  \text{\cite{yhpreprint}}} \ar@{=>}[d]^{\forall k \gg 1  \text{\cite{mab2016,seyrel} }} \ar@{=>}[dr]^{\forall k \gg 1  \text{\cite{santip17} }}& \\
	\bar{\partial} \mathrm{grad}^{1,0}_{\omega_k} \rho_k (\omega_k) = 0 \ar@{=>}[ddr]_{\text{Thm \ref{wrelchowstab}}} \ar@{=>}[dd]_{\text{Thm \ref{relkss}}}   & \bar{\mu}_X \in  \mathfrak{aut}(X,L) \ar@{<=>}[d]^{\text{Cor \ref{cahrelmmwcst}}}\ar@{<.>}[l]_{?} \ar@{<.>}[r]^{?} & \omega_{FS(Hilb(h))} = \sigma^* \omega_h  \ar@{=>}[ddl]^{\text{cf.~Prop \ref{rbalippbbnnla}}}   \\
	&  \text{A-Rel Chow ps}  \ar@{=>}[d] \\
	\text{Rel $K$-ss} & \text{A-Weak Rel Chow ps} &
%
	}
\end{displaymath}

Equivalence of three ``quantised'' or ``relatively balanced'' metrics would be desirable, partly because it would simplify the implications to various stability notions as in the diagram above.

Finally, we remark that the relative $K$-stability of extremal manifolds was proved by Stoppa--Sz\'ekelyhidi \cite{stosze} by using the lower bound of the Calabi functional and a blowup argument.

\bibliographystyle{amsplain}
\bibliography{2017_22_stability}

\begin{flushleft}
Aix Marseille Universit\'e, CNRS, Centrale Marseille, \\
Institut de Math\'ematiques de Marseille, UMR 7373, \\
13453 Marseille, France. \\
Email: \verb|yoshinori.hashimoto@univ-amu.fr|
\end{flushleft}

\end{document}